\numberwithin{equation}{section}
\newtheorem{theorem}{Theorem}[section]
\newtheorem{corollary}{Corollary}[section]
\newtheorem{lemma}{Lemma}[section]
\theoremstyle{definition}
\newtheorem{remark}{Remark}[section]
\newcommand{\real}{{\mathbb R}}
\newcommand{\M}{{\mathcal M}}
\newcommand{\N}{{\mathcal N}}
\newcommand{\BMO}{{\mathcal {BMO}}}
\newcommand{\MO}{{\mathcal {MO}}}
\newcommand{\8}{\infty}
\newcommand{\la}{\langle}
\newcommand{\ra}{\rangle}
\newcommand{\be}{\begin{eqnarray*}}
\newcommand{\ee}{\end{eqnarray*}}
\newcommand{\beq}{\begin{equation}}
\newcommand{\eeq}{\end{equation}}
\newcommand{\beqn}{\begin{equation*}}
\newcommand{\eeqn}{\end{equation*}}
\begin{document}

\title{Wavelet approach to operator-valued Hardy spaces}

\author{Guixiang Hong}
\address{Laboratoire de Math{\'e}matiques, Universit{\'e} de Franche-Comt{\'e}, \newline
25030 Besan\c{c}on Cedex, France\\ \emph{E-mail address:
guixiang.hong@univ-fcomte.fr}}

\author{Zhi Yin}
\address{Laboratoire de Math{\'e}matiques, Universit{\'e} de Franche-Comt{\'e}, \newline
25030 Besan\c{c}on Cedex, France\\ \emph{E-mail address:
hustyinzhi@163.com}}

\date{}
\maketitle

\begin{abstract}
This paper is devoted to the study of operator-valued Hardy spaces
via wavelet method. This approach is parallel to that in
noncommutative martingale case. We show that our Hardy spaces
defined by wavelet coincide with those introduced by Tao Mei via the
usual Lusin and Littlewood-Paley square functions. As a consequence,
we give an explicit complete unconditional basis of the Hardy space
$H_1(\mathbb{R})$ when $H_1(\mathbb{R})$ is equipped with an
appropriate operator space structure.
\end{abstract}

\section{Introduction}
In this paper, we exploit Meyer's wavelet methods to the study of
the operator-valued Hardy spaces. We are motivated by two rapidly
developed fields. The firs one is the theory of noncommutative
martingales inequalities. This theory had been already initiated in
the 1970's. Its modern period of development has begun with Pisier
and Xu's seminal paper \cite{PiXu97} in which the authors
established the noncommutative Burkholder-Gundy inequalities and
Fefferman duality theorem between $H_1$ and $BMO$. Since then many
classical results have been successfully transferred to the
noncommutative world (see \cite{JuXu03}, \cite{JuXu08},
\cite{Mei07}, \cite{BCPY}). In particular, motivated by
\cite{JLX06}, Mei \cite{Mei07} developed the theory of Hardy spaces
on $\mathbb{R}^n$ for operator-valued functions.

Our second motivation is the theory of wavelets founded by Meyer. It
is nowadays well known that this theory is important for many
domains, in particular in harmonic analysis. For instance, it
provides powerful tools to the theory of Calder\'{o}n-Zygmund
singular integral operators. More recently, Meyer's wavelet methods
were extended to study more sophistical subjects in harmonic
analysis. For example, the authors of \cite{FeLa02} exploited the
properties of Meyer's wavelets to give a characterization of product
$BMO$ by commutators; \cite{MPTT} deals with the estimates of
bi-parameter paraproducts.

It is in this spirit that we wish to understand how useful wavelet
methods are for noncommutative analysis. The most natural and
possible way would be first to do this in the semi-commutative case.
This is exactly the purpose of the present paper which could be
viewed as the first attempt towards the development of wavelet
techniques for noncommutative analysis.

A wavelet basis of $L_2(\real)$ is a complete orthonormal system
$(w_I)_{I\in\mathcal {D}}$, where $\mathcal {D}$ denotes the
collection of all dyadic intervals in $\mathbb{R}$, $w$ is a
Schwartz function satisfying the properties needed for Meryer's
construction in \cite{Mey90}, and
$$w_I(x)\doteq\frac{1}{|I|^{\frac{1}{2}}}w\big(\frac{x-c_I}{|I|}\big),$$
where $c_I$ is the center of $I$. The central facts that we will
need about the wavelet basis are the orthogonality between different
$w_I$'s, $\|w\|_{L_2(\real)}=1$ and the regularity of $w$,
$$\max(|w(x)|,|w'(x)|)\precsim(1+|x|)^{-m},\quad\forall m\geq2.$$

The analogy between wavelets and dyadic martingales is well known.
The key observation is the following parallelism:
$$\sum_{|I|=2^{-n+1}}\la
f,w_{I}\ra w_{I}\sim df_n,$$

where $df_n$ denotes $n$-th dyadic martingale difference of $f$. As
dyadic martingales are much easier to handle, this parallelism
explains why wavelet approach to many problems in harmonic analysis
is usually simple and efficient. On the other hand, it also
indicates that martingale methods may be used to deal with wavelets.
With this in mind, we develop the operator-valued Hardy spaces based
on the wavelet methods in the way which is well known in the
noncommutative martingales case. Then we show that our Hardy and BMO
spaces coincide with Mei's. In other words, we provide another
approach, which is much simpler than Mei's original one, to recover
all the results of \cite{Mei07}.

This paper is organized as follows. In section 1, we will give some
preliminaries on noncommutative analysis, the definition of
$\mathcal{H}_p(\real,\M)$ with $1\leq p<\infty$ and
$L_q\MO(\real,\M)$ with $2<q\leq\infty$ in our setting. In section
2, we are concerned with three duality results. The most important
one is the noncommutative analogue of the famous Fefferman duality
theorem between $\mathcal{H}^c_1(\real,\M)$ and
$\mathcal{BMO}^c(\real,\M)$. The second one is the duality between
$\mathcal{H}^c_p(\real,\M)$ and $L^c_{p'}\MO(\real,\M)$ with
$1<p<2$, where we need the noncommutative Doob's inequality, this is
why we consider the case $1<p<2$ independently. The last one is the
duality between $\mathcal{H}^c_p(\real,\M)$ and
$\mathcal{H}^c_{p'}(\real,\M)$ with $1<p<\infty$. As a corollary of
the last two results, we identify $\mathcal{H}^c_q(\real,\M)$ and
$L^c_q\MO(\real,\M)$ with $2<q<\infty$. Section 3 deals with the
interpolation of our Hardy spaces. In the last section, we show that
our Hardy spaces coincide with those of \cite{Mei07}. So, we can
give an explicit completely unconditional basis for the space
$H_1(\real)$, when $H_1(\real)$ is equipped with an appropriate
operator space structure.

We end this introduction by the convention that throughout the paper
the letter $c$ will denote an absolute positive constant, which may
vary from lines to lines, and $c_p$ a positive constant depending
only on $p$.

\section{Preliminaries}

\subsection{Operator-valued noncommutative $L_p$-spaces}
Let $\mathcal{M}$ be a von Neumann algebra equipped with a normal
semifinite faithful trace $\tau$ and $S^+_{\mathcal{M}}$ be the set
of all positive element $x$ in $\mathcal{M}$ with
$\tau(s(x))<\infty$, where $s(x)$ is the smallest projection $e$
such that $exe=x$. Let $S_{\mathcal{M}}$ be the linear span of
$S^+_{\mathcal{M}}$. Then any $x\in S_{\mathcal{M}}$ has finite
trace, and $S_{\mathcal{M}}$ is a $w^*$-dense $*$-subalgebra of
$\mathcal{M}$.

Let $1\leq p<\infty$. For any $x\in S_{\mathcal{M}}$, the operator
$|x|^p$ belongs to $S^+_{\mathcal{M}}$ ($|x|=(x^*x)^{\frac{1}{2}}$).
We define
$$\|x\|_p=\big(\tau(|x|^p)\big)^{\frac{1}{p}},\qquad\forall x\in S_{\mathcal{M}}.$$
One can check that $\|\cdot\|_p$ is well defined and is a norm on
$S_{\mathcal{M}}$. The completion of $(S_{\mathcal{M}},\|\cdot\|_p)$
is denoted by $L_p(\M)$ which is the usual noncommutative $L_p$-
space associated with $(\M,\tau)$. For convenience, we usually set
$L_{\infty}(\M)=\M$ equipped with the operator norm
$\|\cdot\|_{\M}$. The elements of $L_p(\M,\tau)$ can be described as
closed densely defined operators on $H$ ($H$ being the Hilbert space
on which $\M$ acts). We refer the reader to \cite{PiXu03} for more
information on noncommutative $L_p$-spaces.

In this paper, we are concerned with three operator-valued
noncommutative $L_p$-spaces. The first one is the Hilbert-valued
noncommutative space $L_p(\mathcal{M};H^c)$ (resp.
$L_p(\mathcal{M};H^r)$), which is studied at length in \cite{JLX06}.
For this space, we need the following properties. In the sequel,
$p'$ will always denote the conjugate index of $p$.

\begin{lemma}\label{duality between Lp(H)}
Let $1\leq p<\infty$. Then
\begin{equation}
(L_p(\mathcal{M};H^c))^*=L_{p'}(\mathcal{M};H^c).
\end{equation}
Thus, for $f\in L_p(\mathcal{M};H^c)$ and $g\in
L_{p'}(\mathcal{M};H^c)$, we have
$$|\tau(\la f,g\ra)|\leq\|f\|_{L_p(\mathcal{M};H^c)}\|g\|_{L_{p'}(\mathcal{M};H^c)},$$
where $\la,\ra$ denotes the inner product of $H$.
\end{lemma}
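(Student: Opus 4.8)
The plan is to realize $L_p(\M;H^c)$ as a $1$-complemented subspace of an ordinary noncommutative $L_p$-space and then transport the standard self-duality $L_p(\N)^*=L_{p'}(\N)$ to it. Fix an orthonormal basis $(e_i)$ of $H$, let $(e_{ij})$ be the associated matrix units of $B(H)$, and set $\N=\M\,\overline{\ot}\,B(H)$ with trace $\Tr=\tau\ot\tr$. Expanding $f=\sum_i f_i\ot e_i$, the ``first column'' assignment $f\mapsto\tilde f=\sum_i f_i\ot e_{i1}$ embeds $L_p(\M;H^c)$ into $L_p(\N)$, and it is isometric because $\tilde f^*\tilde f=\big(\sum_i f_i^*f_i\big)\ot e_{11}$ gives $\|\tilde f\|_{L_p(\N)}=\big\|\big(\sum_i f_i^*f_i\big)^{1/2}\big\|_{L_p(\M)}=\|f\|_{L_p(\M;H^c)}$. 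The range of this embedding is exactly $L_p(\N)e_{11}$, and since right multiplication by the projection $e_{11}$ is a contractive idempotent on $L_p(\N)$ onto this range, $L_p(\M;H^c)$ is $1$-complemented in $L_p(\N)$.

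First I would dispose of the Cauchy--Schwarz inequality, which needs only H\"older and not the full duality. For $f\in L_p(\M;H^c)$ and $g\in L_{p'}(\M;H^c)$ a direct computation gives $\tilde f^*\tilde g=\la f,g\ra\ot e_{11}$, hence $\tau(\la f,g\ra)=\Tr(\tilde f^*\tilde g)$; the noncommutative H\"older inequality on $\N$ then yields $|\Tr(\tilde f^*\tilde g)|\le\|\tilde f^*\|_{L_p(\N)}\|\tilde g\|_{L_{p'}(\N)}$, and since $\|\tilde f^*\|_p=\|\tilde f\|_p$ this is precisely $\|f\|_{L_p(\M;H^c)}\|g\|_{L_{p'}(\M;H^c)}$.

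For the duality isomorphism I would work with the sesquilinear pairing $(x,y)\mapsto\Tr(x^*y)$ that realizes $L_p(\N)^*=L_{p'}(\N)$ for $1\le p<\infty$ (the endpoint $p=1$ being the predual duality $L_1(\N)^*=\N$). Given $\phi\in(L_p(\M;H^c))^*$, Hahn--Banach extends it to $L_p(\N)$, so it is represented by some $y\in L_{p'}(\N)$ via $x\mapsto\Tr(x^*y)$. Testing only against column elements $x=ae_{11}$ gives $\Tr((ae_{11})^*y)=\Tr((ae_{11})^*ye_{11})$, so $y$ may be replaced by its compression $ye_{11}$, which lies in the column subspace $L_{p'}(\N)e_{11}=L_{p'}(\M;H^c)$ and has no larger norm; moreover the induced functional vanishes precisely when $ye_{11}=0$. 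This produces the isometric identification $(L_p(\M;H^c))^*=L_{p'}(\M;H^c)$, with the pairing implemented by $\tau(\la f,g\ra)$.

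The point requiring the most care, and the main obstacle, is the choice of pairing: it must be the sesquilinear $\tau(\la f,g\ra)$ rather than a bilinear trace pairing, since it is exactly the adjoint in $\tilde f^*$ that carries a column to a column and keeps the dual from collapsing onto a row space $H^r$. The remaining verifications --- that the compression $y\mapsto ye_{11}$ is norm-nonincreasing and that it recovers the column norm isometrically, so that the identification is isometric and not merely isomorphic --- are routine once the H\"older and duality facts on the single algebra $\N$ are in hand.
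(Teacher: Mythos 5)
Your argument is correct, and it is the standard proof of this fact. Note that the paper itself does not prove this lemma: it is quoted as a known property of the column space from the reference [JLX06], so there is no in-paper proof to compare against. Your route --- realizing $L_p(\M;H^c)$ isometrically as the first column $L_p(\N)e_{11}$ of $\N=\M\,\bar{\ot}\,B(H)$, getting the Cauchy--Schwarz inequality from H\"older on $\N$, and obtaining the duality by Hahn--Banach extension followed by compression $y\mapsto ye_{11}$ --- is exactly the argument one finds in the cited literature, and all the steps check out: $\tilde f^*\tilde f=(\sum_i f_i^*f_i)\ot e_{11}$ gives the isometry, $\tilde f^*\tilde g=\la f,g\ra\ot e_{11}$ gives the pairing, and the two-sided norm estimate $\|\phi\|\le\|ye_{11}\|_{p'}\le\|y\|_{p'}=\|\phi\|$ makes the identification isometric. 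Your closing remark about the sesquilinear versus bilinear pairing is also well taken: with the bilinear trace pairing the dual of the column space would be compressed onto $e_{11}L_{p'}(\N)$, i.e.\ the row space, which is precisely why the lemma is stated with $\tau(\la f,g\ra)$.
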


\begin{lemma}
Let $1\leq p_0<p<p_1\leq\8$, $0<\theta<1$,
$\frac{1}{p}=\frac{1-\theta}{p_0}+\frac{\theta}{p_1}$. Then
\begin{equation}\label{interpolation between Lp(H)}
[L_{p_0}(\mathcal{M};H^c),L_{p_1}(\mathcal{M};H^c)]_{\theta}=L_p(\mathcal{M};H^c).
\end{equation}
A same equality holds for row spaces.
\end{lemma}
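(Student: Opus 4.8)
The plan is to realize each column space $L_p(\M;H^c)$ as an isometric, $1$-complemented subspace of a genuine noncommutative $L_p$-space, in such a way that both the embedding and the complementing projection are independent of $p$; the lemma then follows from the classical complex interpolation theorem for noncommutative $L_p$-spaces together with the retraction/corretraction principle for the complex method.

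Fix an orthonormal basis $(e_i)_{i\in I}$ of $H$, so that $H\cong\el_2(I)$, and put $\N=B(\el_2(I))\,\overline{\ot}\,\M$, a semifinite von Neumann algebra with trace $\Tr\ot\tau$. For $f=\sum_i f_i\ot e_i\in L_p(\M;H^c)$ define its ``first column''
$$j(f)=\sum_i e_{i1}\ot f_i\in L_p(\N).$$
A direct computation gives $j(f)^*j(f)=e_{11}\ot\sum_i f_i^*f_i$, whence
$$\|j(f)\|_{L_p(\N)}=\big\|\big(\textstyle\sum_i f_i^*f_i\big)^{1/2}\big\|_{L_p(\M)}=\|f\|_{L_p(\M;H^c)},$$
so $j$ is an isometry for every $p$. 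The map $P(y)=y\,(e_{11}\ot 1)$ is right multiplication by a projection, hence a norm-one projection of $L_p(\N)$ onto its first column, and $P\circ j=\mathrm{id}$. Crucially, both $j$ and $P$ are given by the same algebraic formulas on all of $L_p(\N)$, $1\le p\le\8$; in particular they are compatible across the couple and map $L_{p_0}(\M;H^c)+L_{p_1}(\M;H^c)$ consistently into $L_{p_0}(\N)+L_{p_1}(\N)$, so that $(L_{p_0}(\M;H^c),L_{p_1}(\M;H^c))$ is an interpolation couple.

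First I would invoke the classical interpolation theorem for noncommutative $L_p$-spaces on the semifinite algebra $\N$,
$$[L_{p_0}(\N),L_{p_1}(\N)]_\theta=L_p(\N),\qquad \tfrac1p=\tfrac{1-\theta}{p_0}+\tfrac{\theta}{p_1}.$$
By the retraction lemma for the complex method (a contractive $j$ together with a contractive $P$ satisfying $Pj=\mathrm{id}$), the map $j$ identifies $[L_{p_0}(\M;H^c),L_{p_1}(\M;H^c)]_\theta$ isometrically with the complemented subspace $P([L_{p_0}(\N),L_{p_1}(\N)]_\theta)$ of $L_p(\N)$. Since $P$ acts on $L_p(\N)$ as the first-column projection, that subspace is exactly $j(L_p(\M;H^c))$, and pulling back through the isometry $j$ yields the asserted identity. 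The row case is entirely symmetric, using the first row $\sum_i e_{1i}\ot f_i$ and left multiplication by $e_{11}\ot 1$.

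The step requiring the most care is the endpoint $p_1=\8$. There $L_\8(\M;H^c)$ must be understood through its operator space / weak-$*$ structure rather than the square-function formula, and one must check that $j$ remains an isometric, weak-$*$ continuous embedding onto a weak-$*$ closed complemented subspace of $\N=L_\8(\N)$, so that the retraction argument still applies at the $L_\8$ endpoint; the underlying identity $[L_{p_0}(\N),\N]_\theta=L_p(\N)$ is the Kosaki form of the interpolation theorem and is available. One should also confirm that the formulas for $j$ and $P$ are well defined for an arbitrary, possibly non-separable index set $I$, which only requires the standard reductions. With these points settled, the two cited ingredients close the argument.
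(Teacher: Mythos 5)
Your argument is correct, but note that the paper does not actually prove this lemma at all: it is stated as a known property of the spaces $L_p(\mathcal{M};H^c)$ and attributed to the reference \cite{JLX06}. What you have written is, in substance, the standard proof underlying that citation: realize $L_p(\mathcal{M};H^c)$ isometrically as the first column of $L_p\big(B(\ell_2(I))\overline{\otimes}\mathcal{M}\big)$, observe that right multiplication by $e_{11}\otimes 1$ is a contractive projection onto that column which is given by the same formula for every $p$ (so the couple is compatible and the retraction/corretraction principle for the complex method applies), and then quote $[L_{p_0}(\mathcal{N}),L_{p_1}(\mathcal{N})]_\theta=L_p(\mathcal{N})$. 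The computation $j(f)^*j(f)=e_{11}\otimes\sum_i f_i^*f_i$ is right, and the two points you flag are exactly the ones that need care: at $p_1=\infty$ one must use the weak-$*$ (rather than square-function) description of $L_\infty(\mathcal{M};H^c)$ together with the Kosaki form of the interpolation theorem for the couple $(L_{p_0}(\mathcal{N}),\mathcal{N})$, and one should phrase $P$ as the column projection followed by $j^{-1}$ so that $P\circ j=\mathrm{id}$ holds literally on $L_{p_0}+L_{p_1}$. With those details in place your proof is complete and supplies an argument the paper leaves implicit; the row case is the symmetric statement with $e_{1i}$ and left multiplication, as you say.
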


The second one is the $\ell_{\8}$-valued noncommutative space
$L_p(\mathcal{M};\ell_{\8})$, which is studied by Pisier
\cite{Pis98} for an injective $\M$ and Junge \cite{Jun02} for a
general $\M$ (see also \cite{JuXu03} and \cite{JuXu06} for more
properties). About this one, we need the following property:

\begin{lemma}
Let $1\leq p<\8$. Then
$$(L_p(\mathcal{M};\ell_1))^*=L_{p'}(\mathcal{M};\ell_{\8}).$$
Thus, for $x=(x_n)_n\in L_p(\mathcal{M};\ell_1)$ and $y=(y_n)_n\in
L_{p'}(\mathcal{M};\ell_{\8})$, we have
\begin{equation}\label{duality between Lp(l1) and Lq(l8)}
\big|\sum_{n\geq1}\tau(x_ny_n)\big|\leq\|x\|_{L_p(\mathcal{M};\ell_1)}\|y\|_{L_{p'}(\mathcal{M};\ell_{\8})}.
\end{equation}
\end{lemma}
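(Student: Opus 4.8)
The plan is to obtain the isometric identification from the factorization definitions of the two norms, handling separately the contractive embedding $L_{p'}(\M;\ell_\8)\hookrightarrow(L_p(\M;\ell_1))^*$ (which already delivers the stated inequality) and its surjectivity. I will use that a finite sequence $x=(x_n)$ lies in $L_p(\M;\ell_1)$ with
\[
\|x\|_{L_p(\M;\ell_1)}=\inf\Big\{\big\|\big(\sum_n a_na_n^*\big)^{1/2}\big\|_{2p}\,\big\|\big(\sum_n b_n^*b_n\big)^{1/2}\big\|_{2p}\Big\},
\]
the infimum over all $x_n=a_nb_n$ with $a_n,b_n\in L_{2p}(\M)$, while $y=(y_n)\in L_{p'}(\M;\ell_\8)$ has $\|y\|_{L_{p'}(\M;\ell_\8)}$ equal to the infimum of $\|c\|_{2p'}\big(\sup_n\|z_n\|_\M\big)\|d\|_{2p'}$ over all factorizations $y_n=cz_nd$ with $c,d\in L_{2p'}(\M)$ and $z_n\in\M$.

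For the inequality I would fix such a decomposition of $x$ and factorization of $y$, set $\alpha_n=da_n$ and $\gamma_n=b_nc$ (both in $L_2(\M)$ by H\"{o}lder), and use cyclicity of the trace to write $\sum_n\tau(x_ny_n)=\sum_n\tau(\alpha_n\gamma_nz_n)$. Then $|\sum_n\tau(x_ny_n)|\le(\sup_n\|z_n\|_\M)\sum_n\|\alpha_n\|_2\|\gamma_n\|_2$, and Cauchy--Schwarz in $n$ bounds the sum by $(\sum_n\|\alpha_n\|_2^2)^{1/2}(\sum_n\|\gamma_n\|_2^2)^{1/2}$. A final application of H\"{o}lder on $L_{p'}\times L_p$ gives $\sum_n\|\alpha_n\|_2^2=\tau\big(d^*d\sum_na_na_n^*\big)\le\|d\|_{2p'}^2\big\|(\sum_na_na_n^*)^{1/2}\big\|_{2p}^2$ and symmetrically for $\gamma_n$; taking infima over the two families of factorizations then yields (\ref{duality between Lp(l1) and Lq(l8)}) and the contractive embedding.

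To see the embedding is onto and isometric, I would start from $\phi\in(L_p(\M;\ell_1))^*$ and recover coordinates: the coordinate inclusions $L_p(\M)\to L_p(\M;\ell_1)$ are isometric (a single nonzero entry has $L_p(\M;\ell_1)$-norm equal to its $L_p$-norm, by the scalar factorization of $L_p$), so trace duality $L_p(\M)^*=L_{p'}(\M)$ represents each restriction by a $y_n\in L_{p'}(\M)$, and density of finitely supported sequences gives $\phi(x)=\sum_n\tau(x_ny_n)$. What remains is to show $(y_n)\in L_{p'}(\M;\ell_\8)$ with norm at most $\|\phi\|$, i.e. to produce a single outer pair $c,d\in L_{2p'}(\M)$ and a uniformly bounded sequence $z_n\in\M$ with $y_n=cz_nd$.

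This simultaneous factorization is the main obstacle, and is where the depth of the statement sits: the coordinatewise factorizations of the individual $y_n$ must be forced to share one outer pair $(c,d)$ and a uniformly bounded inner sequence, which cannot be read off one coordinate at a time. I would extract it from the noncommutative factorization machinery of Pisier and Junge: truncating to the first $N$ coordinates, the bound $|\sum_{n\le N}\tau(x_ny_n)|\le\|\phi\|\,\|x\|_{L_p(\M;\ell_1)}$ yields factorizations $y_n=c_Nz_n^Nd_N$ with $\|c_N\|_{2p'}\|d_N\|_{2p'}(\sup_n\|z_n^N\|_\M)\le\|\phi\|$, and a weak-$*$ compactness argument on the uniformly bounded outer factors, letting $N\to\8$, should deliver the global factorization together with the reverse norm inequality. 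Conceptually this is exactly Pisier's vector-valued duality $L_p(\M;E)^*=L_{p'}(\M;E^*)$ applied to the operator space identity $(\ell_1)^*=\ell_\8$ when $\M$ is injective (\cite{Pis98}), with Junge's extension (\cite{Jun02}) removing that hypothesis; so in the write-up I would either reproduce the factorization directly or cite these.
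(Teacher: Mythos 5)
The paper does not actually prove this lemma: it is quoted from Pisier \cite{Pis98} (for injective $\M$) and Junge \cite{Jun02} (for general $\M$), so there is no internal argument to compare yours against. Your proof of the contractive embedding $L_{p'}(\M;\ell_{\8})\hookrightarrow(L_p(\M;\ell_1))^*$ --- and hence of the displayed inequality, which is the only part of the lemma the paper ever invokes (in the estimate of $B^2$ in the proof of Theorem \ref{duality between Hp and BMOp'}, applied there to a positive sequence) --- is correct and standard: cyclicity of the trace, Cauchy--Schwarz in $n$, then H\"older on $L_{p'}\times L_p$. One caveat: the usual definition of $\|x\|_{L_p(\M;\ell_1)}$ in \cite{Jun02} and \cite{JuXu06} takes the infimum over doubly indexed decompositions $x_n=\sum_k a_{kn}b_{kn}$, which is a priori smaller than your single-index infimum; your argument survives verbatim with the extra index (run Cauchy--Schwarz over the pairs $(k,n)$ and set $z_{kn}=z_n$), so this is cosmetic, but as written you bound the pairing by a possibly larger quantity than the actual norm.

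For the converse (surjectivity and the reverse norm estimate) you correctly locate the difficulty in the simultaneous factorization $y_n=cz_nd$ with a single outer pair, but the truncate-and-take-weak-$*$-limits sketch does not close the gap: producing the factorization for each truncation from the bound $|\sum_{n\le N}\tau(x_ny_n)|\le\|\phi\|\,\|x\|$ already \emph{is} the finite case of the theorem (it needs a bipolar or Hahn--Banach separation argument identifying the unit ball of $L_p(\M;\ell_1^N)$), and weak-$*$ limits of $c_N$, $z_n^N$, $d_N$ do not pass through the product $c_Nz_n^Nd_N$ without a further normalization of the outer factors. Since you ultimately defer to \cite{Pis98} and \cite{Jun02} for exactly this point, your treatment amounts to the paper's citation plus a genuine, self-contained proof of the half that actually gets used; that is a sensible division of labour, but the converse half should be presented as cited rather than as proved.
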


The third one is $L_p(\mathcal{M};\ell^c_{\8})$ for $2\leq
p\leq\infty$, which was introduced in \cite{DeJu04} and is related
with the second one by
$$\|(x_n)_n\|_{L_p(\mathcal{M};\ell^c_{\8})}=\|(|x_n|^2)_n\|_{L_{\frac{p}{2}}(\M;\ell_{\8})}.$$ And these are normed
spaces by the following characterization
$$\|(x_n)_n\|_{L_p(\mathcal{M};\ell^c_{\8})}=\inf_{x_n=y_na}\|(y_n)\|_{\ell_{\infty}(L_{\infty}(\M))}\|a\|_{L_p(\M)}.$$
We need the interpolation results about these spaces (see
\cite{Mus03}):

\begin{lemma}\label{interpolation between Lp(lc8)}
Let $2\leq p_0<p<p_1\leq\8$, $0<\theta<1$,
$\frac{1}{p}=\frac{1-\theta}{p_0}+\frac{\theta}{p_1}$. Then
\begin{equation}
[L_{p_0}(\mathcal{M};\ell^c_{\8}),L_{p_1}(\mathcal{M};\ell^c_{\8})]_{\theta}=L_p(\mathcal{M};\ell^c_{\8}).
\end{equation}
\end{lemma}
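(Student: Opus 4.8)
The plan is to prove the two contractive inclusions separately, taking as the working norm the factorization
$$\|(x_n)_n\|_{L_p(\M;\ell^c_{\8})}=\inf_{x_n=y_na}\|(y_n)\|_{\ell_{\8}(L_{\8}(\M))}\|a\|_{L_p(\M)},$$
and writing $p(z)$ for the analytic family of exponents given by $\frac{1}{p(z)}=\frac{1-z}{p_0}+\frac{z}{p_1}$ on the strip $S=\{z:0\leq\mathrm{Re}\,z\leq1\}$, so that $p(\theta)=p$, $\mathrm{Re}\,\frac{1}{p(it)}=\frac{1}{p_0}$ and $\mathrm{Re}\,\frac{1}{p(1+it)}=\frac{1}{p_1}$.

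The inclusion $L_p(\M;\ell^c_{\8})\subseteq[L_{p_0}(\M;\ell^c_{\8}),L_{p_1}(\M;\ell^c_{\8})]_{\theta}$ is the concrete one, and I would obtain it by analytically continuing the scalar factor. Given $(x_n)_n$ of norm $<1$, pick a factorization $x_n=y_na$ with $\|(y_n)\|_{\ell_{\8}(L_{\8}(\M))}\leq1$ and $\|a\|_{L_p(\M)}\leq1$, let $a=v|a|$ be the polar decomposition, and set
$$b(z)=v\,|a|^{\,p/p(z)},\qquad F_n(z)=y_nb(z).$$
Then $F(\theta)=(y_na)_n=(x_n)_n$, while on the two vertical boundaries $|b(it)|=|a|^{p/p_0}$ and $|b(1+it)|=|a|^{p/p_1}$, so that $\|b(it)\|_{L_{p_0}(\M)}=\|a\|_{L_p(\M)}^{p/p_0}\leq1$ and $\|b(1+it)\|_{L_{p_1}(\M)}\leq1$. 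Inserting these factorizations of $F(it)$ and $F(1+it)$ into the definition of the endpoint norms gives boundary values bounded by $1$, whence $(x_n)_n$ belongs to the interpolation space with norm $\leq1$. When $p_1=\8$ the factor $b(1+it)$ is just the support partial isometry of $a$, so the argument is unaffected.

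The reverse inclusion is the main obstacle, since a bounded analytic function whose boundary factorizations depend on $t$ does not visibly yield a single factorization of $(x_n)_n=(F_n(\theta))_n$. I would argue by duality. Since $p<p_1\leq\8$ forces $p<\8$, one has $L_p(\M;\ell^c_{\8})=(L_{p'}(\M;\ell^c_1))^*$, where $L_{p'}(\M;\ell^c_1)$ is the predual column $\ell_1$-scale with its own factorization norm; an analogous analytic-factor argument establishes the easy inclusion $L_{p'}(\M;\ell^c_1)\subseteq[L_{p_0'}(\M;\ell^c_1),L_{p_1'}(\M;\ell^c_1)]_{\theta}$ for this scale. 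Given $(x_n)_n$ with an analytic representative $F$ and a norming functional $(z_n)_n$ in the unit ball of $L_{p'}(\M;\ell^c_1)$, I would extend the latter to an analytic family $G=(G_n)$ through its factorization and apply the three-lines lemma to $z\mapsto\sum_n\tau(F_n(z)G_n(z))$; this bounds $|\sum_n\tau(x_nz_n)|$ by $\|(x_n)_n\|_{[\cdots]_{\theta}}$, and taking the supremum over $(z_n)_n$ yields $\|(x_n)_n\|_{L_p(\M;\ell^c_{\8})}\leq\|(x_n)_n\|_{[\cdots]_{\theta}}$.

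The delicate point in this scheme is the right endpoint $p_1=\8$, where $(L_{\8}(\M;\ell^c_{\8}))^*$ is no longer the column $\ell_1$-space and the complex-interpolation duality breaks down. I would circumvent this by first proving the identity for all finite $p_1$ and then recovering $p_1=\8$ via Wolff's reiteration theorem, inserting an auxiliary finite exponent between $p$ and $\8$. An alternative that avoids duality would be to extract a factorization of $(F_n(\theta))_n$ directly from the analytic function by a Poisson or maximal-function boundary argument, but I expect the duality route to be cleaner; the real work lies in identifying the predual column $\ell_1$-scale and in justifying interpolation--duality at the endpoint.
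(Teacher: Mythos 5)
First, a point of reference: the paper does not prove this lemma at all --- it is quoted from \cite{Mus03} (see also \cite{JuXu06}, \cite{DeJu04}) --- so your attempt can only be measured against the literature, not against an argument in the text. Your lower inclusion $L_p(\mathcal{M};\ell^c_{\infty})\subseteq[L_{p_0}(\mathcal{M};\ell^c_{\infty}),L_{p_1}(\mathcal{M};\ell^c_{\infty})]_{\theta}$ is the standard Calder\'on factorization trick and is correct as written: since $p/p(z)$ is affine in $z$, one has $|b(j+it)|=|a|^{p\,\mathrm{Re}(1/p(j+it))}=|a|^{p/p_j}$, hence $\|b(j+it)\|_{p_j}\leq1$, and multiplying the fixed contractions $y_n$ back in produces admissible factorizations on both boundary lines (including $p_1=\infty$, where $|b(1+it)|$ is the support projection of $a$). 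Modulo the routine approximation needed to make $z\mapsto|a|^{p/p(z)}$ analytic, this direction is complete.

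The genuine defect is your handling of the endpoint $p_1=\infty$ in the reverse inclusion. The Wolff reiteration you propose is circular: to deduce $[L_{p_0}(\mathcal{M};\ell^c_{\infty}),L_{\infty}(\mathcal{M};\ell^c_{\infty})]_{\theta}=L_p(\mathcal{M};\ell^c_{\infty})$ from Wolff's theorem with intermediate spaces $L_p(\mathcal{M};\ell^c_{\infty})$ and $L_s(\mathcal{M};\ell^c_{\infty})$, $p<s<\infty$, you must supply the hypothesis $[L_p(\mathcal{M};\ell^c_{\infty}),L_{\infty}(\mathcal{M};\ell^c_{\infty})]_{\mu}=L_s(\mathcal{M};\ell^c_{\infty})$, which is again an instance of the identity with the $\infty$ endpoint; so that step would fail. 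Fortunately it is also unnecessary: your three-lines argument nowhere uses that $L_1(\mathcal{M};\ell^c_1)$ is the full dual or predual of $L_{\infty}(\mathcal{M};\ell^c_{\infty})$. It needs only (i) the identification $L_p(\mathcal{M};\ell^c_{\infty})=(L_{p'}(\mathcal{M};\ell^c_1))^*$ at the single interior exponent $p$, which is finite in every case since $p<p_1\leq\infty$; (ii) the H\"older-type pairing inequality $|\sum_n\tau(x_nz_n)|\leq\|(x_n)\|_{L_q(\mathcal{M};\ell^c_{\infty})}\|(z_n)\|_{L_{q'}(\mathcal{M};\ell^c_1)}$ on each boundary line, valid for $q=\infty$ as well; and (iii) the easy inclusion for the $\ell^c_1$-scale. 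Run directly, it covers $p_1=\infty$, and the abstract interpolation--duality theorem you worry about is never invoked. What remains genuinely deferred --- and where the real content of the lemma lies --- is the construction of the column $\ell_1$-scale, its factorization norm, and the duality (i); note also that for that scale the ``analogous analytic-factor argument'' must continue a two-sided (and summed) factorization rather than a one-sided one, and that you should first reduce to finitely supported $(z_n)$ with entries in $S_{\mathcal{M}}$ so that $z\mapsto\sum_n\tau(F_n(z)G_n(z))$ is genuinely analytic and bounded on the strip.
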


\subsection{Operator-valued Hardy spaces}
In this paper, for simplicity, we denote
$L_{\infty}(\real)\bar{\otimes}\M$ by $\mathcal{N}$. As indicated in
the introduction, one can observe that we have the following
operator-valued Calder\'on identity
\begin{equation}\label{Calderon identity}
f(x)=\sum_{I\in \mathcal {D}}\la f,w_I\ra w_I(x),
\end{equation}
which holds when $f\in L_2(\N)$. As in the classical case, for $f\in
S_{\N}$, we define the two Littlewood-Paley square functions as
\begin{equation}\label{column square function}
S_c(f)(x)=\Big(\sum_{I\in\mathcal {D}}\frac{|\la
f,w_I\ra|^2}{|I|}\mathds{1}_I(x) \Big)^{\frac{1}{2}}.
\end{equation}
\begin{equation}\label{row square function}
S_r(f)(x)=\Big(\sum_{I\in\mathcal {D}}\frac{|\la
f^*,w_I\ra|^2}{|I|}\mathds{1}_I(x) \Big)^{\frac{1}{2}}.
\end{equation}

For $1\leq p<\infty$, define
$$\|f\|_{\mathcal{H}^c_p}=\|S_c(f)\|_{L_p(\N)},$$
$$\|f\|_{\mathcal{H}^r_p}=\|S_r(f)\|_{L_p(\N)}.$$
These are norms, which can be seen easily from the space
$L_p(\mathcal{N};\ell^c_2(\mathcal{D}))$. So we define the spaces
$\mathcal{H}^c_p(\real,\M)$\;(resp. $\mathcal{H}^r_p(\real,\M)$) as
the completion of
$(S_{\mathcal{N}},\|\cdot\|_{\mathcal{H}^c_p(\real,\M)})$\;(resp.
$(S_{\mathcal{N}},\|\cdot\|_{\mathcal{H}^c_p(\real,\M)})$. Now, we
define the operator-valued Hardy spaces as follows: for $1\leq p<2,$
\begin{equation}\label{Hardy spaces for p<2}
\mathcal{H}_p(\real,\M)=\mathcal{H}^{c}_p(\real,\M)+\mathcal{H}^{r}_p(\real,\M)
\end{equation}
with the norm
$$\|f\|_{\mathcal{H}_p}=\inf\{\|g\|_{\mathcal{H}^{c}_p}+\|h\|_{\mathcal{H}^{r}_p}:
f=g+h,g\in\mathcal{H}^{c}_p,h\in\mathcal{H}^{r}_p\}$$ and for $2\leq
p<\infty$,
\begin{equation}\label{Hardy spaces for p>2}
\mathcal{H}_p(\real,\M)=\mathcal{H}^{c}_p(\real,\M)\cap\mathcal{H}^{r}_p(\real,\M)
\end{equation}
with the norm defined as
$$\|f\|_{\mathcal{H}_p}=\max\{\|f\|_{\mathcal{H}^{c}_p},\|f\|_{\mathcal{H}^{r}_p}\}.$$

We can identify $\mathcal{H}^c_p(\real,\M)$ as a subspace of
$L_p(\N;\ell^c_2(\mathcal {D}))$, which is related with the two maps
below.

\begin{lemma}
$\rm(i)$ The embedding map $\Phi$ is defined from
$\mathcal{H}^c_p(\real,\M)$ to $L_p(\N;\ell^c_2(\mathcal {D}))$ by
\begin{equation}\label{identify Hp as Lp(lc2)}
\Phi(f)=\Big(\frac{\la
f,w_I\ra}{|I|^{\frac{1}{2}}}\mathds{1}_{I}\Big)_{I\in\mathcal{D}}.
\end{equation}
$\rm(ii)$ The projection map $\Psi$ is defined from
$L_2(\N;\ell^c_2(\mathcal {D}))$ to $\mathcal{H}^c_2(\real,\M)$ by
\begin{equation}\label{projection from Lc2 as H2}
\Psi((g_I))=\sum_{I\in \mathcal
{D}}\int\frac{g_I}{|I|^{\frac{1}{2}}}\mathds{1}_{I}dy\cdot w_I.
\end{equation}
\end{lemma}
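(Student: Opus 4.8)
The statement records two explicit maps and asserts the properties implicit in the preceding sentence: that $\Phi$ is an isometry of $\mathcal{H}^c_p(\real,\M)$ into $L_p(\N;\ell^c_2(\mathcal{D}))$ for every $1\le p<\infty$, that $\Psi$ is a contraction of $L_2(\N;\ell^c_2(\mathcal{D}))$ onto $\mathcal{H}^c_2(\real,\M)$, and that together they satisfy $\Psi\circ\Phi=\mathrm{id}$, so that $\Phi$ realizes $\mathcal{H}^c_p$ as a subspace of $L_p(\N;\ell^c_2(\mathcal{D}))$ (indeed $1$-complemented when $p=2$). The plan is to verify each of these three assertions by a direct computation, working first on the dense subspace $S_{\N}$ and extending afterwards by the density of $S_{\N}$ in the relevant completions.

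For (i) I would simply unwind the column norm. Writing $g_I=\Phi(f)_I=|I|^{-1/2}\la f,w_I\ra\mathds{1}_I$, each $g_I$ is the element $|I|^{-1/2}\,\mathds{1}_I\ot\la f,w_I\ra$ of $\N$, so that $g_I^*g_I=|I|^{-1}\mathds{1}_I\ot|\la f,w_I\ra|^2$ and hence $\sum_{I}g_I^*g_I=S_c(f)^2$. By the definition of the column norm this gives
$$\|\Phi(f)\|_{L_p(\N;\ell^c_2(\mathcal{D}))}=\big\|\big(\textstyle\sum_I g_I^*g_I\big)^{1/2}\big\|_{L_p(\N)}=\|S_c(f)\|_{L_p(\N)}=\|f\|_{\mathcal{H}^c_p},$$
which is exactly the claimed isometry.

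For (ii), recalling $\N=L_\infty(\real)\bar{\otimes}\M$, each $g_I\in L_2(\N)$ may be viewed as an $L_2(\M)$-valued function on $\real$, so the coefficient $\lambda_I:=|I|^{-1/2}\int g_I\mathds{1}_I\,dy$ is a well-defined Bochner integral in $L_2(\M)$ (convergence follows from Cauchy--Schwarz, since $\int_I\|g_I(y)\|_{L_2(\M)}\,dy\le|I|^{1/2}\|g_I\|_{L_2(\N)}$). Using orthonormality of the $w_I$ one computes $\la\Psi((g_I)),w_J\ra=\lambda_J$, whence, since at $p=2$ the column norm coincides with the Hilbertian $\ell_2$ norm,
$$\|\Psi((g_I))\|_{\mathcal{H}^c_2}^2=\sum_J\tau\big(|\lambda_J|^2\big)=\sum_J\frac{1}{|J|}\tau\Big(\Big|\int_J g_J\,dy\Big|^2\Big).$$
The key inequality is the operator-valued Cauchy--Schwarz estimate $\big(\int_J g_J\,dy\big)^*\big(\int_J g_J\,dy\big)\le|J|\int_J g_J^*g_J\,dy$; applying it termwise and taking traces bounds the sum by $\sum_J\|g_J\|_{L_2(\N)}^2=\|(g_J)\|_{L_2(\N;\ell^c_2(\mathcal{D}))}^2$, giving contractivity. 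Finally, for $f\in S_{\N}$ a direct substitution shows $\lambda_I=\la f,w_I\ra$ when $(g_I)=\Phi(f)$, so that $\Psi(\Phi(f))=\sum_I\la f,w_I\ra w_I=f$ by the Calder\'on identity \eqref{Calderon identity}; since $\Psi$ is contractive and $\Phi$ isometric, this also shows $\Phi\Psi$ is a norm-one projection onto $\Phi(\mathcal{H}^c_2)$.

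The computations are otherwise routine; the only genuinely substantive point is the operator-valued Cauchy--Schwarz step underlying the contractivity of $\Psi$, together with the attendant bookkeeping needed to justify convergence of the defining series and the passage from $S_{\N}$ to the completions. I do not expect any essential difficulty beyond this.
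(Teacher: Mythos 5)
Your proposal is correct, and it supplies exactly the routine verification that the paper leaves implicit (the lemma is stated without proof there): the identity $\sum_I\Phi(f)_I^*\Phi(f)_I=S_c(f)^2$ gives the isometry of $\Phi$, the Kadison--Schwarz inequality for the conditional expectation $\frac{1}{|J|}\int_J\cdot\,dy$ gives the contractivity of $\Psi$ on $L_2(\N;\ell^c_2(\mathcal{D}))$, and the Calder\'on identity gives $\Psi\circ\Phi=\mathrm{id}$. This is the same mechanism the paper itself invokes later when it proves the converse halves of the duality theorems, so no further comment is needed.
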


\subsection{Operator-valued $\mathcal{BMO}$ spaces}
For $\varphi\in L_{\infty}(\M; L^c_2(\real,\frac{dx}{1+x^2}))$, set
\begin{equation}
\|\varphi\|_{\BMO^c}=\sup_{J\in\mathcal
{D}}\Big\|\big(\frac{1}{|J|}\sum_{I\subset J}|\la
\varphi,w_I\ra|^2\big)^{\frac{1}{2}}\Big\|_{\M}
\end{equation}
and
$$\|\varphi\|_{\mathcal{BMO}^r}=\|\varphi^*\|_{\mathcal{BMO}^c(\real,\M)}.$$
These are again norms modulo constant functions. Define
$$\BMO^c(\real,\M)=\{\varphi \in L_{\infty}(\M;
L^c_2(\real,\frac{dx}{1+x^2})): \|\varphi\|_{\BMO^c}<\infty\}$$ and
$$\BMO^r(\real,\M)=\{\varphi \in L_{\infty}(\M;
L^r_2(\real,\frac{dx}{1+x^2})): \|\varphi\|_{\BMO^r}<\infty\}$$ Now
we define
$$\mathcal{BMO}(\real,\M)=\mathcal{BMO}^{c}(\real,\M)\cap\mathcal{BMO}^{r}(\real,\M).$$

As in the martingale case \cite{JuXu03}, we can also define
$L^c_p\MO(\real,\M)$ for all $2<p\leq\infty$. For $\varphi\in
L_p(\M; L^c_2(\real,\frac{dx}{1+x^2}))$, set
\begin{equation}
\|\varphi\|_{L^c_p\MO}=\Big\|(\frac{1}{|I^x_k|}\sum_{I\subset
I^x_k}|\la
\varphi,w_I\ra|^2)_k\Big\|_{L_{\frac{p}{2}}(\N;\ell_{\infty})}
\end{equation}
and
$$\|\varphi\|_{L^r_p\MO}=\|\varphi^*\|_{L^c_p\MO},$$
where $I^x_k$ denotes the unique dyadic interval with length
$2^{-k+1}$ that containing $x$. We will use the convention adopted
in \cite{JuXu06} for the norm in $L_{\frac{p}{2}}(\N;
\ell_{\infty}).$ Thus $$\Big\|(\frac{1}{|I^x_k|}\sum_{I\subset
I^x_k}|\la
\varphi,w_I\ra|^2)_k\Big\|_{L_{\frac{p}{2}}(\N;\ell_{\infty})}=\Big\|{\sup_k}^+\frac{1}{|I^x_k|}\sum_{I\subset
I^x_k}|\la \varphi,w_I\ra|^2\Big\|_{L_{\frac{p}{2}(\N)}}$$

These are norms, which can be seen from the Banach spaces
$L_p(\mathcal{N}\bar{\otimes}B(\ell_2(\mathcal{D}));\ell^c_{\infty})$.
Again, we can define
$$L^c_p\MO(\real,\M)=\{\varphi \in L_p(\M;
L^c_2(\real,\frac{dx}{1+x^2})): \|\varphi\|_{L^c_p\MO}<\infty\}$$
and
$$L^r_p\MO(\real,\M)=\{\varphi \in L_p(\M;
L^r_2(\real,\frac{dx}{1+x^2})): \|\varphi\|_{L^c_r\MO}<\infty\}$$
Define
$$L_p\MO(\real,\M)=L^c_p\MO(\real,\M)\cap L^r_p\MO(\real,\M).$$
Note that $L^c_{\infty}\MO(\real,\M)=\BMO^c(\real,\M)$. It is easy
to check all the spaces we defined here respect to the relevant
norms are Banach spaces.

\section{Duality}

To prove the first two duality results in this section, we need the
following noncommutative Doob's inequality from \cite{Jun02}.

Let $(\mathcal{E}_n)_n$ be the conditional expectation with respect
to a filtration $({\N}_n)_n$ of $\N$.

\begin{lemma}
Let $1<p\leq\infty$ and $f\in L_p(\N)$. Then
\begin{equation}\label{maximal inequality}
\|{\sup_n}^+\mathcal{E}_n(f)\|_{L_p(\N)}\leq c_p\|f\|_{L_p(\N)}.
\end{equation}
\end{lemma}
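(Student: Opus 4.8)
The plan is to reduce this maximal inequality to an adjoint ("dual Doob") inequality via the duality between $L_p(\N;\ell_\infty)$ and $L_{p'}(\N;\ell_1)$, and to isolate that dual inequality as the genuine content. First I would read the left-hand side correctly: by the $\sup^+$ convention, $\|{\sup_n}^+\E_n(f)\|_{L_p(\N)}$ is the norm of the martingale $(\E_n(f))_n$ in $L_p(\N;\ell_\infty)$. Splitting $f$ and using the Kadison--Schwarz inequality $\E_n(f)^*\E_n(f)\le\E_n(f^*f)$ to pass from a general $f$ to $|f|$, I would reduce to $f\ge0$, so that each $\E_n(f)\ge0$. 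The endpoint $p=\infty$ is then immediate, since every $\E_n$ is a unital contraction on $\M$, whence $\sup_n\|\E_n(f)\|_\M\le\|f\|_\M$.

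For $1<p<\infty$ I would invoke the predual identity $L_p(\N;\ell_\infty)=(L_{p'}(\N;\ell_1))^*$ coming from the stated duality lemma (applied with $p$ replaced by $p'$), and compute the norm by testing:
$$\|(\E_n(f))_n\|_{L_p(\N;\ell_\infty)}=\sup\Big\{\sum_n\tau(\E_n(f)\,z_n):\ (z_n)_n\ge0,\ \big\|\textstyle\sum_n z_n\big\|_{p'}\le1\Big\},$$
where restricting to positive test sequences is legitimate because the $\E_n(f)$ are positive and $\|(z_n)\|_{L_{p'}(\N;\ell_1)}=\|\sum_n z_n\|_{p'}$ on positive sequences. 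Since each $\E_n$ is trace-preserving and bimodular, $\tau(\E_n(f)z_n)=\tau(f\,\E_n(z_n))$, so by H\"older's inequality
$$\sum_n\tau(\E_n(f)z_n)=\tau\Big(f\sum_n\E_n(z_n)\Big)\le\|f\|_p\,\Big\|\sum_n\E_n(z_n)\Big\|_{p'}.$$
Hence the whole statement follows once I establish the dual Doob inequality: for every positive sequence $(z_n)$ and every $1\le q<\infty$,
$$\Big\|\sum_n\E_n(z_n)\Big\|_q\le c_q\,\Big\|\sum_n z_n\Big\|_q,\qquad q=p'.$$

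The dual inequality is where the real work sits, and I expect it to be the main obstacle. The case $q=1$ is trivial: since $\tau$ is preserved by each $\E_n$, one has $\|\sum_n\E_n(z_n)\|_1=\sum_n\tau(z_n)=\|\sum_n z_n\|_1$ with constant $1$. The temptation is then to interpolate between $q=1$ and $q=\infty$, but this is precisely what is forbidden, because the $q=\infty$ endpoint (equivalently $p=1$ in the maximal inequality) is false; no soft interpolation between the two ends can recover the intermediate range. I would therefore attack it by genuinely noncommutative means: either a self-improving scheme that upgrades the bound along a sequence of exponents (e.g.\ passing from $q$ to $2q$ using Cauchy--Schwarz together with the module property of the $\E_n$) and then interpolating upward from $q=1$ to fill in all $1\le q<\infty$, or the Hilbert-module factorization of the family $(\E_n)$ due to Junge, which is the route of the cited reference \cite{Jun02}. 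Once the dual inequality is available with a constant $c_q$, the displayed chain yields the maximal inequality with $c_p$ depending only on $p$, and the passage from positive $f$ back to general $f$ is routine.
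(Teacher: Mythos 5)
The paper offers no proof of this lemma at all: it is imported verbatim as Doob's inequality from Junge's paper \cite{Jun02}, so there is no internal argument to compare yours against. Your dualization is the correct and standard first step of that theorem: the identification of $\|{\sup_n}^+\E_n(f)\|_p$ with the $L_p(\N;\ell_\infty)$ norm, the testing against positive sequences $(z_n)$ with $\|\sum_n z_n\|_{p'}\le 1$, the self-adjointness $\tau(\E_n(f)z_n)=\tau(f\,\E_n(z_n))$, and H\"older, do reduce the maximal inequality to the dual Doob inequality $\|\sum_n\E_n(z_n)\|_{q}\le c_q\|\sum_n z_n\|_{q}$ for positive $z_n$ and $q=p'$. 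You are also right that $q=1$ is trivial and that interpolation with the (false) $q=\infty$ endpoint is unavailable.

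The genuine gap is that the dual Doob inequality for $1<q<\infty$ \emph{is} the theorem, and your proposal stops exactly where it begins. Both escape routes you name are real --- the Cauchy--Schwarz/bimodularity bootstrap from $q$ to $2q$ combined with complex interpolation of the $\ell_1$-valued spaces to fill in $(1,\infty)$, or Junge's Hilbert-module factorization of the family $(\E_n)$ --- but neither is carried out, and each contains the actual analytic content (for instance, the $q\to 2q$ step requires the operator inequality $\bigl(\sum_n a_n\bigr)^2\le \sum_n\bigl(a_n b_n+b_n a_n\bigr)$ with $b_n=\sum_{m\ge n}\E_m(z_m)$ and a careful use of the module property, none of which is routine). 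A second, smaller point: the Kadison--Schwarz inequality $\E_n(f)^*\E_n(f)\le\E_n(f^*f)$ compares $|\E_n(f)|^2$ with $\E_n(|f|^2)$, not with $\E_n(|f|)^2$, so it does not by itself reduce the $\ell_\infty$-valued maximal norm to the positive case; the correct reduction is the linear decomposition $f=(f_1-f_2)+i(f_3-f_4)$ into four positives. As it stands your text is an accurate placement of the lemma in its proper context --- matching the paper's own decision to cite \cite{Jun02} rather than prove it --- but not a proof.
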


\begin{theorem}\label{duality between H1 and BMO}
We have
\begin{equation}
(\mathcal{H}^c_1(\real,\M))^*=\BMO^c(\real,\M)
\end{equation}
with equivalent norms. That is, every
$\varphi\in\mathcal{BMO}^c(\real,\M)$ induces a continuous linear
functional $l_{\varphi}$ on $\mathcal{H}^c_1(\real,\M)$ by
\begin{equation}\label{identity of duality between H1 and BMO}
l_{\varphi}(f)=\tau\int \varphi^*f,\quad\forall f\in
S_{\mathcal{N}}.
\end{equation}
Conversely, for every $l\in(\mathcal{H}^c_1(\real,\M))^*$, there
exits a $\varphi\in\mathcal{BMO}^c(\real,\M)$ such that
$l=l_{\varphi}$. Moreover,
$$c^{-1}\|\varphi\|_{\mathcal{BMO}^c}\leq\|l_{\varphi}\|_{(\mathcal{H}^c_1)^*}\leq c\|\varphi\|_{\mathcal{BMO}^c}$$
where $c>0$ is a universal constant.

Similarly, the duality holds between $\mathcal{H}^r_1$ and
$\mathcal{BMO}^r$, between $\mathcal{H}_1$ and $\mathcal{BMO}$ with
equivalent norms.
\end{theorem}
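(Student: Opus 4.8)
My plan is to reduce everything to the bilinear form on wavelet coefficients and then prove the two norm inequalities separately. Since $(w_I)_{I\in\mathcal{D}}$ is orthonormal, the Calder\'on identity (\ref{Calderon identity}) shows that for $f,\varphi\in S_{\N}$
$$l_\varphi(f)=\tau\int\varphi^* f=\sum_{I\in\mathcal{D}}\tau\big(\la\varphi,w_I\ra^*\la f,w_I\ra\big),$$
so all estimates may be carried out on this sum and afterwards extended to the completions by density of $S_{\N}$.

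First I would produce a $\BMO^c$ symbol from a given $l\in(\mathcal{H}^c_1(\real,\M))^*$. By the very definition of $\|\cdot\|_{\mathcal{H}^c_1}$, the map $\Phi$ of (\ref{identify Hp as Lp(lc2)}) embeds $\mathcal{H}^c_1(\real,\M)$ isometrically onto a subspace of $L_1(\N;\ell^c_2(\mathcal{D}))$. Transporting $l$ to this subspace, extending it by Hahn--Banach without increasing the norm, and using the duality $(L_1(\N;\ell^c_2(\mathcal{D})))^*=L_\infty(\N;\ell^c_2(\mathcal{D}))$ furnished by Lemma \ref{duality between Lp(H)}, I obtain $(g_I)_{I}\in L_\infty(\N;\ell^c_2(\mathcal{D}))$ with $\|(g_I)\|_\infty=\|l\|$ representing the extension. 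Putting $\la\varphi,w_I\ra=|I|^{-1/2}\int_I g_I\,dy$ and $\varphi=\sum_I\la\varphi,w_I\ra w_I$, a direct computation identifies $l$ with $l_\varphi$. The remaining point is the estimate $\|\varphi\|_{\BMO^c}\le\|l\|$: for a fixed dyadic $J$ the operator Cauchy--Schwarz inequality gives $|\la\varphi,w_I\ra|^2\le\int_I|g_I|^2\,dy$, hence
$$\frac{1}{|J|}\sum_{I\subset J}|\la\varphi,w_I\ra|^2\le\frac{1}{|J|}\int_J\Big(\sum_{I\subset J,\,I\ni y}|g_I(y)|^2\Big)dy\le\|(g_I)\|_\infty^2,$$
since the inner sum is dominated in $\M$ by $\|(g_I)\|_\infty^2$ for a.e. $y$; taking the supremum over $J$ yields $\|\varphi\|_{\BMO^c}\le\|(g_I)\|_\infty=\|l\|$.

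The main obstacle is the converse inequality $|l_\varphi(f)|\le c\|\varphi\|_{\BMO^c}\|f\|_{\mathcal{H}^c_1}$, the Fefferman--Carleson pairing estimate; here a naive operator Cauchy--Schwarz only controls $l_\varphi$ by the stronger quantity $\|(\sum_I|I|^{-1}|\la\varphi,w_I\ra|^2\mathds{1}_I)^{1/2}\|_{L_\infty(\N)}$, so the Carleson character of the $\BMO^c$ condition must be exploited. My plan is to recast $\|\varphi\|_{\BMO^c}$ in conditional-expectation form: letting $(\N_k)_k$ be the filtration generated by the dyadic intervals of length $\ge2^{-k+1}$ and $\mathcal{E}_k$ the corresponding conditional expectation, one has
$$\mathcal{E}_k\Big(\sum_{|I|\le2^{-k+1}}\frac{|\la\varphi,w_I\ra|^2}{|I|}\mathds{1}_I\Big)(x)=\frac{1}{|I^x_k|}\sum_{I\subset I^x_k}|\la\varphi,w_I\ra|^2,$$
so that $\|\varphi\|_{\BMO^c}^2=\sup_k\big\|\mathcal{E}_k(\sum_{|I|\le2^{-k+1}}|I|^{-1}|\la\varphi,w_I\ra|^2\mathds{1}_I)\big\|_{\M}$. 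Writing $d_kf=\sum_{|I|=2^{-k+1}}\la f,w_I\ra w_I$ and using the orthogonality $\tau((d_j\varphi)^* d_kf)=0$ for $j>k$, an Abel summation turns the pairing into $l_\varphi(f)=\sum_k\tau\big((\varphi-\mathcal{E}_{k-1}\varphi)^* d_kf\big)$. From this identity I would run the noncommutative Fefferman argument, estimating the sum by the product of the Carleson quantity above and the square function $S_c(f)$, the maximal function that appears being absorbed by the noncommutative Doob inequality (\ref{maximal inequality}). Matching these truncated square functions to the level-$k$ conditional expectations at the $p=1$ endpoint is exactly the delicate step; once it is done, tracing constants gives $\|l_\varphi\|_{(\mathcal{H}^c_1)^*}\le c\|\varphi\|_{\BMO^c}$, which combined with the previous paragraph proves the asserted norm equivalence together with the representation $l=l_\varphi$.

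Finally, the row version is obtained verbatim by replacing $f,\varphi$ with $f^*,\varphi^*$, using $\|f\|_{\mathcal{H}^r_1}=\|f^*\|_{\mathcal{H}^c_1}$ and $\|\varphi\|_{\BMO^r}=\|\varphi^*\|_{\BMO^c}$. For the duality between $\mathcal{H}_1$ and $\mathcal{BMO}$ I would note that $\mathcal{H}_1=\mathcal{H}^c_1+\mathcal{H}^r_1$ and $\mathcal{BMO}=\mathcal{BMO}^c\cap\mathcal{BMO}^r$ form a compatible couple, so the general identity $(X+Y)^*=X^*\cap Y^*$ together with the column and row cases gives $(\mathcal{H}_1)^*=(\mathcal{H}^c_1)^*\cap(\mathcal{H}^r_1)^*=\mathcal{BMO}^c\cap\mathcal{BMO}^r=\mathcal{BMO}$ with equivalent norms.
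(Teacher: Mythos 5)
The second half of your argument --- producing $\varphi=\Psi(g)$ from $l$ via $\Phi$, Hahn--Banach, the duality $(L_1(\N;\ell^c_2(\mathcal{D})))^*=L_\infty(\N;\ell^c_2(\mathcal{D}))$, and the Kadison--Schwarz estimate $|\la\varphi,w_I\ra|^2\le\int_I|g_I|^2$ --- is exactly the paper's proof and is complete. The gap is in the main direction, the inequality $|l_\varphi(f)|\le c\|\varphi\|_{\BMO^c}\|f\|_{\mathcal{H}^c_1}$, which you reduce to an identity and a "delicate step" that you do not carry out. Two concrete problems. First, the identity $l_\varphi(f)=\sum_k\tau\big((\varphi-\mathcal{E}_{k-1}\varphi)^*d_kf\big)$ is not available here: for Meyer wavelets $\sum_{j\le k-1}d_j\varphi$ is the wavelet projection onto coarse scales, \emph{not} the dyadic conditional expectation $\mathcal{E}_{k-1}\varphi$, since $w_I$ is a Schwartz function with full support and $\mathcal{E}_{k-1}(d_k\varphi)\neq0$. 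Orthogonality of the $w_I$'s (which holds for all $j\neq k$, not only $j>k$) reduces the pairing to $\sum_k\tau\int(d_k\varphi)^*\,d_kf$, and subtracting $\mathcal{E}_{k-1}\varphi$ buys nothing; the martingale-style Abel summation you have in mind does not transplant literally. Second, your plan to absorb the maximal function "by the noncommutative Doob inequality (\ref{maximal inequality})" cannot work at this endpoint: that inequality is stated (and is true) only for $p>1$, and the paper deliberately avoids it in the $p=1$ case, invoking it only for the $\mathcal{H}^c_p$--$L^c_{p'}\MO$ duality with $1<p<2$.

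The step you defer is precisely the content of the theorem. The paper's mechanism is: insert the truncated square function $S_{c,n}(f)$ and its inverse into the level-$n$ pairing via the operator Cauchy--Schwarz inequality, splitting $|l_\varphi(f)|\le A\cdot B$ with
\begin{equation*}
A^2=\sum_n\tau\!\int\big(S^2_{c,n}(f)-S^2_{c,n-1}(f)\big)S^{-1}_{c,n}(f)\le 2\|f\|_{\mathcal{H}^c_1},
\end{equation*}
by factoring $S^2_{c,n}-S^2_{c,n-1}=(S_{c,n}-S_{c,n-1})(S_{c,n}+S_{c,n-1})$ and using $\|1+S_{c,n-1}S^{-1}_{c,n}\|_\infty\le2$, while $B^2$ is handled by Abel summation in $k$ together with the facts that $S_{c,k}(f)-S_{c,k-1}(f)$ is positive and constant on each dyadic interval $I^j_k$ of length $2^{-k+1}$ and that $\big\|\,|I^j_k|^{-1}\sum_{I\subset I^j_k}|\la\varphi,w_I\ra|^2\big\|_\infty\le\|\varphi\|^2_{\BMO^c}$; no maximal inequality enters. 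Without this (or an equivalent substitute such as an atomic decomposition of $\mathcal{H}^c_1$), your proposal does not establish $\|l_\varphi\|_{(\mathcal{H}^c_1)^*}\le c\|\varphi\|_{\BMO^c}$. The row case and the passage to $(\mathcal{H}_1)^*=\mathcal{BMO}$ via $(X+Y)^*=X^*\cap Y^*$ are fine and agree with the paper.
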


In order to adapt the arguments in the martingale case, we need to
define the truncated square functions for $n\in\mathbb{Z}$,
$$S_{c,n}(f)(x)=\Big(\sum^n_{k=-\infty}\sum_{|I|=2^{-k+1}}\frac{|\la
f,w_I\ra|^2}{|I|}\mathds{1}_I(x) \Big)^{\frac{1}{2}}.$$

\begin{proof} Since $S_{\mathcal{N}}$ is dense in
$\mathcal{H}^c_1(\real,\M)$, by an approximation argument, we only
need to prove the inequality
$$|l_{\varphi}(f)|\leq c\|\varphi\|_{\mathcal{BMO}^c}\|f\|_{\mathcal{H}^c_1}$$
for $f\in S_{\mathcal{N}}$. By approximation we may assume that
$S_{c,n}(f)(x)$ is invertible in $\M$ for all $x\in\real$ and
$n\in\mathbb{Z}$. Then we have
\begin{align*}
|l_{\varphi}(f)|&=|\tau\int \varphi^*fdx|\\
&=\Big|\sum_n\tau\int \sum_{|I|=2^{-n+1}}\la \varphi,w_I\ra^*w_{I}
\sum_{|I'|=2^{-n+1}}\la
f,w_{I'}\ra w_{I'}dx\Big|\\
&=\Big|\sum_n\tau\int \sum_{|I|=2^{-n+1}}\frac{\la
\varphi,w_I\ra^*}{|I|^{\frac{1}{2}}}\mathds{1}_{I}
\sum_{|I'|=2^{-n+1}}\frac{\la
f,w_{I'}\ra}{|I|^{\frac{1}{2}}}\mathds{1}_{I'}dx\Big|\\
&\leq\sum_n\Big(\tau\int\big|\sum_{|I|=2^{-n+1}}\frac{\la
f,w_{I}\ra}{|I|^{\frac{1}{2}}}\mathds{1}_{I}\big|^2S^{-1}_{c,n}(f)\Big)^{\frac{1}{2}}\\
&\qquad\cdot\Big(\tau\int\big|\sum_{|I|=2^{-n+1}}\frac{\la
\varphi,w_{I}\ra}{|I|^{\frac{1}{2}}}\mathds{1}_{I}\big|^2{S_{c,n}(f)}\Big)^{\frac{1}{2}}\\
&\leq\Big(\sum_n\tau\int\sum_{|I|=2^{-n+1}}\frac{|\la
f,w_I\ra|^2}{|I|}\mathds{1}_{I}S^{-1}_{c,n}(f)\Big)^{\frac{1}{2}}\\
&\qquad\cdot\Big(\sum_n\tau\int\sum_{|I|=2^{-n+1}}\frac{|\la
\varphi,w_I\ra|^2}{|I|}\mathds{1}_{I}{S_{c,n}(f)}\Big)^{\frac{1}{2}}\\
&=A\cdot B.
\end{align*}
In the above estimates, the first equality has used the
orthogonality of the $w_I$'s on different levels, the second one the
orthogonality of the $w_I$'s on the same level and the disjoint of
different dyadic $I$'s on the same level; the first inequality has
used the H\"{o}lder inequality in Lemma \ref{duality between Lp(H)},
and the second one the Cauchy-Schwarz inequality and the
disjointness of different $I$'s on the same level.

Now, let us estimate $A$: \be\begin{split}
A^2&=\sum_n\tau\int(S^2_{c,n}(f)-S^2_{c,n-1}(f))S^{-1}_{c,n}(f)\\
&=\sum_n\tau\int(S_{c,n}(f)-S_{c,n-1}(f))(1+S_{c,n-1}(f)S^{-1}_{c,n}(f))\\
&\leq\sum_n\tau\int(S_{c,n}(f)-S_{c,n-1}(f))\|1+S_{c,n-1}(f)S^{-1}_{c,n}(f)\|_{\8}\\
&\leq2\sum_n\tau\int(S_{c,n}(f)-S_{c,n-1}(f))\\
&=2\|f\|_{\mathcal{H}^c_1}.
\end{split}\ee
For the first inequality, we have used the H\"{o}lder inequality and
the positivity of $S_{c,n}(f)-S_{c,n-1}(f)$.

The second term is estimated as follows: \be\begin{split}
B^2&=\sum_{k}\tau\int(S_{c,k}(f)-S_{c,k-1}(f))\sum_{n\geq
k}\sum_{|I|=2^{-n+1}}\frac{|\la
\varphi,w_I\ra|^2}{|I|}\mathds{1}_{I}\\
&=\sum_{k}\tau\sum_j
(S_{c,k}(f)-S_{c,k-1}(f))\int_{I^j_k}\sum_{n\geq
k}\sum_{|I|=2^{-n+1}}\frac{|\la
\varphi,w_I\ra|^2}{|I|}\mathds{1}_{I}\\
&=\sum_{k}\tau
\sum_j\int_{I^j_k}(S_{c,k}(f)-S_{c,k-1}(f))\frac{1}{|I^j_k|}\sum_{I\subset
I^j_k}|\la
\varphi,w_I\ra|^2\\
&\leq\sum_{k}\sum_j\tau
\int_{I^j_k}(S_{c,k}(f)-S_{c,k-1}(f))\Big\|\frac{1}{|I^j_k|}\sum_{I\subset
I^j_k}|\la
\varphi,w_I\ra|^2\Big\|_{\8}\\
&\leq\|\varphi\|^2_{\BMO^c}\sum_{k}\sum_j\tau
\int_{I^j_k}(S_{c,k}(f)-S_{c,k-1}(f))\\
&=\|\varphi\|^2_{\BMO^c}\|f\|_{\mathcal{H}^c_1}\\
\end{split}\ee
The fist equality has used the Fubini theorem, the second one the
fact that $S_{c,k-1}(f)$ and $S_{c,k}(f)$ are constant on the dyadic
interval $I^j_k=[j2^{-k+1},(j+1)2^{-k+1})$; the first inequality has
used the H\"{o}lder inequality and the positivity of
$S_{c,n}(f)-S_{c,n-1}(f)$.

Now, let us begin to deal with another direction, i.e. suppose that
$l$ is a bounded linear functional on $\mathcal{H}^c_1(\real,\M)$,
we want to find an operator-valued function $\varphi$ in
$\BMO^c(\real,\M)$, such that $l=l_{\varphi}$ and
$l_{\varphi}(f)=\tau\int \varphi^*f$ for $f\in S_{\mathcal{\N}}$. By
the embedding operator $\Phi$ in (\ref{identify Hp as Lp(lc2)}) and
by the Banach-Hahn theorem, $l$ extends to a bounded continuous
functional on $L_1(\N;\ell^c_2(\mathcal {D}))$ of the same norm.
Then by the results in Lemma \ref{duality between Lp(H)} there
exists $g=(g_I)_{I\in \mathcal {D}}$ such that
$\|g\|_{L_{\8}(\N;\ell^c_2(\mathcal {D}))}=\|l\|$, and
$$l(f)=\tau\int\sum_{I\in\mathcal {D}}g^*_I\frac{\la f,w_I\ra}{|I|^{\frac{1}{2}}}\mathds{1}_{I},\quad\forall
f\in S_{\mathcal{N}}.$$

Now let $\varphi=\Psi(g)$, where $\Psi$ is defined as
(\ref{projection from Lc2 as H2}). The orthogonality of the $w_I$'s
yields \be\begin{split} \big\|\sum_{I\subset J}|\la
\varphi,w_I\ra|^2\big\|_{\M}&=\big\|\sum_{I\subset
J}|\int\frac{g_I}{|I|^{\frac{1}{2}}}\mathds{1}_{I}|^2\big\|_{\M}
\leq\big\|\sum_{I\subset J}\int_{J}|g_I|^2\big\|_{\M}\\
&\leq|J|\big\|\sum_{I\subset J}|g_I|^2\big\|_{L_{\8}(\N)}
\leq|J|\big\|(g_I)_I\big\|_{L_{\8}(\mathcal{N};\ell^c_2(\mathcal {D}))},\\
\end{split}\ee
where the first inequality has used the Kadison-Schwartz inequality.
Also thanks to the orthogonality of the $w_I$'s, we get
\be\begin{split} l(f)&=\tau\int\sum_{I\in\mathcal {D}}g^*_I\frac{\la
f,w_I\ra}{|I|^{\frac{1}{2}}}\mathds{1}_{I}
=\tau\int \varphi^*f\\
\end{split}\ee
for all $f\in S_{\mathcal{N}}$. Therefore, we complete the proof
about $\mathcal{H}^c_1(\real,\M)$ and $\mathcal{BMO}^c(\real,\M)$.
Passing to adjoint, we have the conclusion concerning
$\mathcal{H}^r_1$ and $\mathcal{BMO}^r$. Finally, by the classical
fact that the dual of a sum space is the intersection space, we
obtain the duality between $\mathcal{H}_1$ and $\mathcal{BMO}$.
\end{proof}

\begin{theorem}\label{duality between Hp and BMOp'}
Let $1<p<2$. We have
\begin{equation}
(\mathcal{H}^c_p(\real,\M))^*=L^c_{p'}\MO(\real,\M)
\end{equation}
with equivalent norms. That is, every $\varphi\in
L^c_{p'}\MO(\real,\M)$ induces a continuous linear functional
$l_{\varphi}$ on $\mathcal{H}^c_p(\real,\M)$ by
\begin{equation}\label{identity of duality between Hp and BMOp'}
l_{\varphi}(f)=\tau\int \varphi^*f,\quad\forall f\in
S_{\mathcal{N}}.
\end{equation}
Conversely, for every $l\in(\mathcal{H}^c_p(\real,\M))^*$, there
exists an operator-valued function $\varphi\in
L^c_{p'}\MO(\real,\M)$ such that $l=l_{\varphi}$ and
$$c_p^{-1}\|\varphi\|_{L^c_{p'}\MO}\leq\|l_{\varphi}\|_{(\mathcal{H}^c_p)^*}\leq \sqrt{2}\|\varphi\|_{L^c_{p'}\MO}$$

Similarly, the duality holds between $\mathcal{H}^r_p$ and
$L^r_{p'}$, between $\mathcal{H}_p$ and $L_{p'}\MO$ with equivalent
norms.
\end{theorem}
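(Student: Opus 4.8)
The plan is to prove the two inclusions separately, following the proof of Theorem~\ref{duality between H1 and BMO} almost verbatim; the one genuinely new ingredient is that the $L_\infty$-estimates used there for $\BMO^c$ must be replaced by $L_{p'/2}$-estimates, which is exactly where the noncommutative Doob inequality \eqref{maximal inequality} will be invoked, and which forces the restriction $p<2$.

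For the upper bound $\|l_\varphi\|\le\sqrt2\,\|\varphi\|_{L^c_{p'}\MO}$, I would reduce by density to $f\in S_\N$ with each $S_{c,n}(f)$ invertible, expand $l_\varphi(f)=\tau\int\varphi^*f$ by the Calder\'on identity \eqref{Calderon identity} and the orthogonality of the $w_I$ within and across levels, and then split by the weighted Cauchy--Schwarz inequality of Lemma~\ref{duality between Lp(H)}. The new point is the weight: in place of the self-dual $S_{c,n}(f)^{\pm1}$ of the case $p=1$ I insert $S_{c,n}(f)^{(p-2)/2}$ on the $f$-side and $S_{c,n}(f)^{(2-p)/2}$ on the $\varphi$-side, which gives $|l_\varphi(f)|\le A\cdot B$ with
\[
A^2=\sum_n\tau\int\big(S_{c,n}^2(f)-S_{c,n-1}^2(f)\big)S_{c,n}(f)^{p-2},\qquad
B^2=\sum_n\tau\int\Big(\sum_{|I|=2^{-n+1}}\frac{|\la\varphi,w_I\ra|^2}{|I|}\mathds{1}_I\Big)S_{c,n}(f)^{2-p}.
\]

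For $A$, arguing under the trace with cyclicity as in Theorem~\ref{duality between H1 and BMO} (now with the fractional power $S_{c,n}(f)^{p-2}$ replacing $S_{c,n}(f)^{-1}$) I expect the telescoping bound $A^2\le 2\,\tau\int S_c(f)^p=2\|f\|_{\mathcal{H}^c_p}^p$. \emph{This is the main technical obstacle:} because $x\mapsto x^2$ is not operator monotone and the operators involved do not commute, the elementary identity available for $p=1$ must be replaced by a genuine operator/trace convexity estimate for the fractional power, of the kind standard in the noncommutative martingale literature (cf.~\cite{JuXu03}). For $B$ one cannot pull out an $L_\infty$-norm; instead, after the Abel summation $S_{c,n}(f)^{2-p}=\sum_{k\le n}\big(S_{c,k}(f)^{2-p}-S_{c,k-1}(f)^{2-p}\big)$ (legitimate since $x\mapsto x^{2-p}$ is operator monotone for $0<2-p<1$) and the Fubini/constancy-on-dyadic-intervals rearrangement of Theorem~\ref{duality between H1 and BMO}, $B^2$ becomes $\sum_k\tau\int\big(S_{c,k}(f)^{2-p}-S_{c,k-1}(f)^{2-p}\big)g_k$ with $g_k(x)=\frac1{|I^x_k|}\sum_{I\subset I^x_k}|\la\varphi,w_I\ra|^2$. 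Applying the $\ell_1$--$\ell_\infty$ duality \eqref{duality between Lp(l1) and Lq(l8)} at exponent $p'/2$, telescoping $\sum_k\big(S_{c,k}(f)^{2-p}-S_{c,k-1}(f)^{2-p}\big)=S_c(f)^{2-p}$, and using the exponent identity $(2-p)(p'/2)'=p$, I get $B^2\le\|f\|_{\mathcal{H}^c_p}^{2-p}\|\varphi\|_{L^c_{p'}\MO}^2$. Multiplying the bounds for $A$ and $B$ yields $|l_\varphi(f)|\le\sqrt2\,\|f\|_{\mathcal{H}^c_p}\|\varphi\|_{L^c_{p'}\MO}$.

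For the converse, given $l\in(\mathcal{H}^c_p)^*$ I embed via $\Phi$ in \eqref{identify Hp as Lp(lc2)} and extend $l$ by Hahn--Banach to $L_{p'}(\N;\ell^c_2(\mathcal{D}))$ using Lemma~\ref{duality between Lp(H)}, obtaining $g=(g_I)$ with $\|g\|_{L_{p'}(\N;\ell^c_2(\mathcal{D}))}=\|l\|$ and $l(f)=\tau\int\sum_I g_I^*\frac{\la f,w_I\ra}{|I|^{1/2}}\mathds{1}_I$. Setting $\varphi=\Psi(g)$ as in \eqref{projection from Lc2 as H2}, the orthogonality of the $w_I$ gives $l=l_\varphi$. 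To control the $MO$-norm, I use the Kadison--Schwarz bound $|\la\varphi,w_I\ra|^2\le\int_I|g_I|^2$ to dominate the density $\frac1{|I^x_k|}\sum_{I\subset I^x_k}|\la\varphi,w_I\ra|^2$ by the dyadic conditional expectation $\E_k(G)(x)$ of $G:=\sum_{I\in\mathcal{D}}|g_I|^2$, so that
\[
\|\varphi\|_{L^c_{p'}\MO}^2=\Big\|{\sup_k}^+\frac1{|I^x_k|}\sum_{I\subset I^x_k}|\la\varphi,w_I\ra|^2\Big\|_{L_{p'/2}(\N)}\le\Big\|{\sup_k}^+\E_k(G)\Big\|_{L_{p'/2}(\N)}\le c_{p'/2}\,\|G\|_{L_{p'/2}(\N)}.
\]
The last inequality is the noncommutative Doob inequality \eqref{maximal inequality}, available precisely because $p'/2>1$, i.e.\ $p<2$; since $\|G\|_{L_{p'/2}(\N)}=\|g\|_{L_{p'}(\N;\ell^c_2(\mathcal{D}))}^2=\|l\|^2$, this gives $\|\varphi\|_{L^c_{p'}\MO}\le c_p\|l\|$. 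Finally, the row case follows by passing to adjoints, and the duality between $\mathcal{H}_p=\mathcal{H}^c_p+\mathcal{H}^r_p$ and $L_{p'}\MO=L^c_{p'}\MO\cap L^r_{p'}\MO$ from the fact that the dual of a sum is the intersection of the duals, exactly as at the end of Theorem~\ref{duality between H1 and BMO}.
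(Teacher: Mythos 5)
Your proposal follows the paper's proof essentially verbatim: the same weighted Cauchy--Schwarz splitting with weights $S_{c,n}(f)^{(p-2)/2}$ and $S_{c,n}(f)^{(2-p)/2}$, the same telescoping estimates for $A$ and $B$ (including the exponent identity $(2-p)(p'/2)'=p$ and the $\ell_1$--$\ell_\infty$ duality), and the same Hahn--Banach/Kadison--Schwarz/Doob argument for the converse, with the correct observation that $p'/2>1$ is what forces $p<2$. The ``operator/trace convexity estimate'' you flag as the main technical obstacle in bounding $A$ is precisely the lemma the paper quotes from \cite{JuXu03} (applied with $s=2-p$, $t=2$, followed by operator monotonicity of $x\mapsto x^{p-1}$), so your outline contains no genuine gap.
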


We need the following lemma of \cite{JuXu03}. We write it down for
the reader's convenience but without proof.

\begin{lemma}
Let $s,t$ be two real numbers such that $s<t$ and $0\leq s\leq1\leq
t\leq2$. Let $x,y$ be two positive operators such that $x\leq y$ and
$x^{t-s}, y^{t-s}\in L_1(\N)$. Then
$$\tau\int y^{-s/2}(y^t-x^t)y^{-s/2}\leq2\tau\int y^{-(s+1-t)/2}(y-x)y^{-(s+1-t)/2}.$$
\end{lemma}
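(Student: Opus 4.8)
The plan is to reduce the stated inequality, via the tracial structure of $\N$, to a single trace inequality and then exploit the operator convexity of $z\mapsto z^t$ on the range $1\le t\le2$. First, since $\tau\int$ is the trace on $\N=L_\infty(\real)\bar\otimes\M$, cyclicity gives $\tau\int y^{-s/2}(y^t-x^t)y^{-s/2}=\tau\int y^{-s}(y^t-x^t)$ and $\tau\int y^{-(s+1-t)/2}(y-x)y^{-(s+1-t)/2}=\tau\int y^{t-s-1}(y-x)$. So it suffices to prove $\tau\int y^{-s}(y^t-x^t)\le 2\,\tau\int y^{t-s-1}(y-x)$. I may assume $y$ is invertible, as in the approximation used just above, and the hypotheses $x^{t-s},y^{t-s}\in L_1(\N)$ guarantee that every trace written below is finite.

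Next, because $1\le t\le2$, the function $f(z)=z^t$ is operator convex on $[0,\infty)$. The associated operator tangent-line inequality yields, as an inequality between self-adjoint operators, $x^t\ge y^t+Df(y)[x-y]$, where $Df(y)[\,\cdot\,]$ denotes the Fr\'echet derivative of $f$ at $y$; equivalently $y^t-x^t\le Df(y)[y-x]$. Since $y^{-s}\ge0$ and $\tau\int$ preserves order against a positive weight, I obtain $\tau\int y^{-s}(y^t-x^t)\le\tau\int y^{-s}Df(y)[y-x]$.

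The point is that the weight $y^{-s}$, being a function of $y$, collapses the derivative: for any bounded Borel function $g$ and self-adjoint $h$ one has $\tau\int g(y)Df(y)[h]=\tau\int g(y)f'(y)h$. I would check this first for $f(z)=z^n$, where $Df(y)[h]=\sum_{k=0}^{n-1}y^k h\,y^{n-1-k}$ and cyclicity together with the commutation of $g(y)$ with the powers of $y$ collapse each summand to $g(y)y^{n-1}h$, giving $n\,\tau\int g(y)y^{n-1}h=\tau\int g(y)f'(y)h$; the case $f(z)=z^t$ then follows by approximation. Taking $g(y)=y^{-s}$, $f'(y)=t\,y^{t-1}$ and $h=y-x$ gives $\tau\int y^{-s}Df(y)[y-x]=t\,\tau\int y^{t-s-1}(y-x)$.

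Finally, $y^{t-s-1}\ge0$ and $y-x\ge0$ force $\tau\int y^{t-s-1}(y-x)\ge0$, so the bound $t\le2$ gives $\tau\int y^{-s}(y^t-x^t)\le t\,\tau\int y^{t-s-1}(y-x)\le2\,\tau\int y^{t-s-1}(y-x)$, which is exactly the assertion; note that the factor $2$ is precisely the slack in $t\le2$. The hard part will be the rigor in the semifinite von Neumann setting: justifying the operator gradient inequality and the derivative identity for the non-integer exponent $t$ when $y$ may fail to be invertible or bounded. I would handle this by the standard regularization --- replacing $y$ by $y+\e$, cutting down by spectral projections of $y$, and using the integrability hypotheses $x^{t-s},y^{t-s}\in L_1(\N)$ to pass to the limit.
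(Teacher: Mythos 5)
The paper does not actually prove this lemma: it is imported verbatim from \cite{JuXu03} ``without proof,'' so there is no in-paper argument to compare yours against. Judged on its own, your outline is sound and, as far as I can tell, complete in its main ideas. The engine is the tangent-line inequality $f(x)\ge f(y)+Df(y)[x-y]$ for the operator convex function $f(z)=z^t$, $1\le t\le 2$ (verifiable first for $g(z)=(\lambda+z)^{-1}$, where it reduces to $(b^{-1}-a^{-1})a(b^{-1}-a^{-1})\ge 0$ with $a=\lambda+x$, $b=\lambda+y$, and then integrated against $\lambda^{t-2}\,d\lambda$), followed by the collapse $\tau\int g(y)Df(y)[h]=\tau\int g(y)f'(y)h$, which you correctly justify by cyclicity. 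Your argument in fact gives the constant $t$ rather than $2$, consistent with the scalar mean-value heuristic. This is essentially the same mechanism as the source's proof in \cite{JuXu03}, which works hands-on with the integral representation $a^{r}=c_r\int_0^\infty\lambda^{r-1}a(\lambda+a)^{-1}\,d\lambda$ and resolvent identities rather than invoking operator convexity abstractly; your packaging is cleaner but not a genuinely different route.

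Two concrete points to tighten. First, your proposed regularization $y\mapsto y+\e$ does not work as stated: $\N=L_\infty(\real)\bar\otimes\M$ is semifinite but not finite, so $(y+\e)^{t-s}\notin L_1(\N)$ and the right-hand side becomes meaningless. You should instead cut down by spectral projections $e_\e=\un_{[\e,\infty)}(y)$ and work in $e_\e\N e_\e$ (taking care that $(e_\e xe_\e)^t\ne e_\e x^te_\e$), or simply observe that in the paper's application $y=S_{c,n}(f)$ has already been assumed invertible by approximation, so the invertible case suffices. Second, you never use $0\le s\le 1$; it is worth noting that this hypothesis is what makes the left-hand side finite rather than $-\infty$, via the operator anti-monotonicity of $z\mapsto z^{-s}$ for $s\in[0,1]$: $x\le y$ gives $y^{-s}\le x^{-s}$, hence $\tau\int y^{-s}x^t=\tau\int x^{t/2}y^{-s}x^{t/2}\le\tau\int x^{t-s}<\infty$. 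Neither point is a gap in the inequality itself, but both belong in a rigorous write-up.
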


\begin{proof} We need only to prove the first assertion on $\mathcal{H}^c_p$. Since $S_{\mathcal{N}}$ is dense in
$\mathcal{H}^c_p(\real,\M)$, by an approximation argument, we only
need to prove the inequality
$$|l_{\varphi}(f)|\leq c\|\varphi\|_{L^c_{p'}\MO}\|f\|_{\mathcal{H}^c_p}$$
for $f\in S_{\mathcal{N}}$. By approximation we may assume that
$S_{c,n}(f)(x)$ is invertible in $\M$ for all $x\in\real$ and
$n\in\mathbb{Z}$. By the similar principle in the noncommutative
martingale case as in \cite{JuXu03}, we have \be\begin{split}
|l_{\varphi}(f)|&=|\tau\int \varphi^*fdx|\\
&=\Big|\sum_n\tau\int \sum_{|I|=2^{-n+1}}\la\varphi,w_I\ra^*w_{I}
\sum_{|I'|=2^{-n+1}}\la
f,w_{I'}\ra w_{I'}dx\Big|\\
&=\Big|\sum_n\tau\int
\sum_{|I|=2^{-n+1}}\frac{\la\varphi,w_I\ra^*}{|I|^{\frac{1}{2}}}\mathds{1}_{I}
\sum_{|I'|=2^{-n+1}}\frac{\la
f,w_{I'}\ra}{|I|^{\frac{1}{2}}}\mathds{1}_{I'}dx\Big|\\
&\leq\sum_n\Big(\tau\int\big|\sum_{|I|=2^{-n+1}}\frac{\la
f,w_I\ra}{|I|^{\frac{1}{2}}}\mathds{1}_{I}\big|^2S^{p-2}_{c,n}(f)\Big)^{\frac{1}{2}}\\
&\qquad\cdot\Big(\tau\int\big|\sum_{|I|=2^{-n+1}}\frac{\la
\varphi,w_I\ra}{|I|^{\frac{1}{2}}}\mathds{1}_{I}\big|^2{S^{2-p}_{c,n}(f)}\Big)^{\frac{1}{2}}\\
&\leq\Big(\sum_n\tau\int\sum_{|I|=2^{-n+1}}\frac{|\la
f,w_I\ra|^2}{|I|}\mathds{1}_{I}S^{p-2}_{c,n}(f)\Big)^{\frac{1}{2}}\\
&\qquad\cdot\Big(\sum_n\tau\int\sum_{|I|=2^{-n+1}}\frac{|\la
\varphi,w_I\ra|^2}{|I|}\mathds{1}_{I}{S^{2-p}_{c,n}(f)}\Big)^{\frac{1}{2}}\\
&=A\cdot B.\\
\end{split}\ee

Now we need the above lemma to estimate the first term. Take $s=2-p$
and $t=2$, the lemma yields
\begin{align*}
A^2&=\sum_n\tau\int(S^2_{c,n}(f)-S^2_{c,n-1}(f))S^{p-2}_{c,n}(f)\\
&=\sum_n\tau\int S^{-(2-p)/2}_{c,n}(f)(S^2_{c,n}(f)-S^2_{c,n-1}(f))S^{-(2-p)/2}_{c,n}(f)\\
&\leq2\sum_n\tau\int
S^{-(1-p)/2}_{c,n}(f)(S_{c,n}(f)-S_{c,n-1}(f))S^{-(1-p)/2}_{c,n}(f)\\
&=2\sum_n\tau\int S_{c,n}(f)-S_{c,n-1}(f)S^{p-1}_{c,n}(f)\\
&\leq2\sum_n\tau\int S^p_{c,n}(f)-S^{p}_{c,n-1}(f)\\
&=2\|f\|^p_{\mathcal{H}^c_p}.
\end{align*}
The last inequality has used two elementary inequalities: $0\leq
S_{c,n-1}(f) \leq S_{c,n}(f)$ implies $S^{p-1}_{c,n-1}(f)\leq
S^{p-1}_{c,n}(f)$ for $0<p-1<1$; and
$\tau(S^{p}_{c,n-1}(f))\leq\tau(S^{\frac{1}{2}}_{c,n-1}(f)S^{p-1}_{c,n}(f)S^{\frac{1}{2}}_{c,n-1}(f)).$

The second term can be deduced from the nontrivial duality results
in Lemma \ref{duality between Lp(l1) and Lq(l8)} for $1<p<\infty$ as
follows.

\be\begin{split}  B^2&=\sum_{k}\tau\int
S^{2-p}_{c,k}(f)-S^{2-p}_{c,k-1}(f)\sum_{n\geq
k}\sum_{|I|=2^{-n+1}}\frac{|\la
\varphi,w_I\ra|^2}{|I|}\mathds{1}_{I}\\
&=\sum_{k}\tau
\sum_jS^{2-p}_{c,k}(f)-S^{2-p}_{c,k-1}(f)\int_{I^j_k}\sum_{n\geq
k}\sum_{|I|=2^{-n+1}}\frac{|\la
\varphi,w_{I}\ra|^2}{|I|}\mathds{1}_{I}\\
&=\sum_{k}\tau\sum_j
\int\mathds{1}_{I^j_k}(x)S^{2-p}_{c,k}(f)(x)-S^{2-p}_{c,k-1}(f)(x)\frac{1}{|I^j_k|}\sum_{I\subset
I^j_k}|\la
\varphi,w_{I}\ra|^2dx\\
&=\sum_{k}\tau \int
S^{2-p}_{c,k}(f)(x)-S^{2-p}_{c,k-1}(f)(x)\frac{1}{|I^x_k|}\sum_{I\subset
I^x_k}|\la
\varphi,w_{I}\ra|^2dx\\
&\leq\|\sum_{k}S^{2-p}_{c,k}(f)-S^{2-p}_{c,k-1}(f)\|_{L_{({p'}/2)'}}\Big\|\sup_k\frac{1}{|I^x_{k}|}
\sum_{I\subset I^x_{k}}|\la\varphi,w_{I}\ra|^2\Big\|_{L_{{p'}/2}}\\
&=\|\varphi\|^2_{L^c_{p'}\MO}\|f\|^{2-p}_{\mathcal{H}^c_p}
\end{split}\ee
The fist equality has used the Fubini theorem, the second one the
fact that $S_{c,k-1}(f)$ and $S_{c,k}(f)$ are constant on the dyadic
intervals with length $2^{-k+1}$.

For another direction, we can carry out the proof as that in the
case $p=1$. Suppose that $l$ is a bounded linear functional on
$\mathcal{H}^c_p(\real,\M)$. By the embedding operator $\Phi$ and by
Hahn-Banach theorem, and the results in Lemma \ref{duality between
Lp(H)}, we can find $g=(g_I)_{I\in \mathcal {D}}$ such that
$\|g\|_{L_{p'}(\N;\ell^c_2(\mathcal {D}))}=\|l\|$ and
$$l(f)=\tau\int\sum_{I\in\mathcal {D}}g^*_I\frac{\la f,w_I\ra}{|I|^{\frac{1}{2}}}\mathds{1}_{I},\forall
f\in S_{\mathcal{N}}.$$

Now let $\varphi=\Psi(g)$ defined in (\ref{projection from Lc2 as
H2}), the orthogonality of the $w_I$'s yields \be\begin{split}
\big\|{\sup_n}^+&\frac{1}{|I^x_{n}|}\sum_{I\subset I^x_n}|\la
\varphi,w_I\ra|^2\big\|_{L_{p'/2}(\N)}\\
&=\big\|{\sup_n}^+\frac{1}{|I^x_{n}|}\sum_{I\subset I^x_n}
|\int\frac{g_I}{|I|^{\frac{1}{2}}}\mathds{1}_{I}|^2\big\|_{L_{p'/2}(\N)}\\
&\leq\big\|{\sup_n}^+\frac{1}{|I^x_{n}|}\sum_{I\subset I^x_n}\int_{I^x_n}|g_I|^2\big\|_{L_{p'/2}(\N)}\\
&\leq\big\|{\sup_n}^+\frac{1}{|I^x_{n}|}\int_{I^x_n}\sum_{I\subset I^x_n}|g_I|^2\big\|_{L_{p'/2}(\N)}\\
&\leq\big\|{\sup_n}^+\frac{1}{|I^x_{n}|}\int_{I^x_n}\sum_{I\in\mathcal {D}}|g_I|^2\big\|_{L_{p'/2}(\N)}\\
&\leq c\big\|\sum_{I\in\mathcal {D}}|g_I|^2\big\|_{L_{p'/2}(\N)}\\
&=c\big\|(g_I)_{I}\big\|_{L_{p'}(\N;\ell^c_2(\mathcal {D}))},\\
\end{split}\ee
where for the first inequality we have used the Kadison-Schwartz
inequality, and the last inequality is (\ref{maximal inequality}).
Also due to the orthogonality of the $w_I$'s, we get
\be\begin{split} l(f)&=\tau\int\sum_{I\in\mathcal {D}}g^*_I\frac{\la
f,w_I\ra}{|I|^{\frac{1}{2}}}\mathds{1}_{I}
=\tau\int \varphi^*f,\\
\end{split}\ee
for all $f\in S_{\mathcal{N}}$. Therefore, we complete the proof
about $\mathcal{H}^c_p(\real,\M)$ and $L^c_{p'}\MO(\real,\M)$.
\end{proof}

Instead of using the noncommutative Doob's inequality, we will use
the following noncommutative Stein inequality from \cite{PiXu97} to
prove the duality between the spaces $\mathcal{H}^c_p$,
$1<p<\infty$.

Let $(\mathcal{E}_n)_n$ be the conditional expectation with respect
to a filtration $({\N}_n)_n$ of $\N$.

\begin{lemma}\label{Stein's inequality}
Let $1<p<\8$ and $a=(a_n)_n\in L_p(\N;\ell^c_2)$. Then there exists
a constant depending only on $p$ such that
\begin{equation}
\Big\|\big(\sum_{n}|\mathcal{E}_na_n|^2\big)^{\frac{1}{2}}\Big\|_{L_p(\N)}\leq
c_p\Big\|\big(\sum_{n}|a_n|^2\big)^{\frac{1}{2}}\Big\|_{L_p(\N)}.
\end{equation}
\end{lemma}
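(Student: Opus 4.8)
The plan is to read the inequality as boundedness of the linear map $T(a)=(\mathcal{E}_na_n)_n$ on $L_p(\N;\ell^c_2)$ and to split the range $1<p<\infty$ at $p=2$ by self-duality. First I would record that $T$ is self-adjoint for the pairing of Lemma \ref{duality between Lp(H)} with $H=\ell_2$, namely $(L_p(\N;\ell^c_2))^*=L_{p'}(\N;\ell^c_2)$ via $\langle a,b\rangle=\sum_n\tau(b_n^*a_n)$. Indeed, since each $\mathcal{E}_n$ is a trace-preserving conditional expectation, the identity $\tau(\mathcal{E}_n(x)y)=\tau(x\mathcal{E}_n(y))$ and the module property give $\langle Ta,b\rangle=\sum_n\tau(\mathcal{E}_n(b_n^*)\mathcal{E}_n(a_n))=\langle a,Tb\rangle$. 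Hence $\|T\|_{L_p\to L_p}=\|T\|_{L_{p'}\to L_{p'}}$, and it suffices to treat $2\leq p<\infty$.

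For $2\leq p<\infty$ I would pass from the column square function to a single conditional expectation via Kadison--Schwarz. Writing $q=p/2\geq1$ and $b_n=|a_n|^2\geq0$, the operator inequality $|\mathcal{E}_na_n|^2=\mathcal{E}_n(a_n)^*\mathcal{E}_n(a_n)\leq\mathcal{E}_n(|a_n|^2)$ yields $\sum_n|\mathcal{E}_na_n|^2\leq\sum_n\mathcal{E}_n(b_n)$, and the monotonicity of $\|\cdot\|_{L_q(\N)}$ on positive operators gives
$$\Big\|\big(\sum_n|\mathcal{E}_na_n|^2\big)^{\frac12}\Big\|_{L_p(\N)}^2=\Big\|\sum_n|\mathcal{E}_na_n|^2\Big\|_{L_q(\N)}\leq\Big\|\sum_n\mathcal{E}_n(b_n)\Big\|_{L_q(\N)}.$$
Thus the whole statement reduces to the dual Doob inequality $\|\sum_n\mathcal{E}_n(b_n)\|_{L_q(\N)}\leq c_q\|\sum_nb_n\|_{L_q(\N)}$ for positive $b_n$ and $q>1$.

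To prove this dual form I would test against a positive $y\in L_{q'}(\N)$ with $\|y\|_{L_{q'}(\N)}\leq1$ and use $\tau(\mathcal{E}_n(b_n)y)=\tau(b_n\mathcal{E}_n(y))$. Viewing $(b_n)_n\in L_q(\N;\ell_1)$ and $(\mathcal{E}_ny)_n\in L_{q'}(\N;\ell_{\infty})$, the duality \eqref{duality between Lp(l1) and Lq(l8)} gives $|\sum_n\tau(b_n\mathcal{E}_ny)|\leq\|(b_n)\|_{L_q(\N;\ell_1)}\,\|(\mathcal{E}_ny)\|_{L_{q'}(\N;\ell_{\infty})}$. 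Here $\|(b_n)\|_{L_q(\N;\ell_1)}=\|\sum_nb_n\|_{L_q(\N)}$ because the $b_n$ are positive, while the maximal inequality \eqref{maximal inequality} bounds $\|(\mathcal{E}_ny)\|_{L_{q'}(\N;\ell_{\infty})}=\|{\sup_n}^+\mathcal{E}_n(y)\|_{L_{q'}(\N)}$ by $c_{q'}\|y\|_{L_{q'}(\N)}\leq c_{q'}$ (with $q'=(p/2)'\in(1,\infty]$, so Doob applies). Taking the supremum over $y$ delivers the dual Doob inequality with $c_q=c_{q'}$, and unwinding the reductions proves the claim.

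The genuinely deep ingredient is the noncommutative Doob maximal inequality, which is supplied here as \eqref{maximal inequality}; granting it, the only points demanding care are the self-adjointness bookkeeping for $T$, the positive-entries identity $\|(b_n)\|_{L_q(\N;\ell_1)}=\|\sum_nb_n\|_{L_q(\N)}$, and the verification that $q'$ stays in the range where Doob is available. I expect the main obstacle to be phrasing the $L_q(\N;\ell_1)$/$L_{q'}(\N;\ell_{\infty})$ duality step cleanly, since it is precisely there that the noncommutative maximal function enters and the estimate cannot be reduced to a scalar computation.
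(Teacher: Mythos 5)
Your proof is correct, but it is worth pointing out that the paper does not prove Lemma \ref{Stein's inequality} at all: it is imported verbatim from \cite{PiXu97} as a black box, in exactly the same way that Doob's inequality \eqref{maximal inequality} is imported from \cite{Jun02}. What you have written is the standard modern derivation of Stein from Doob: the self-adjointness of $T(a)=(\mathcal{E}_na_n)_n$ for the pairing of Lemma \ref{duality between Lp(H)} reduces matters to $2\leq p<\infty$; Kadison--Schwarz and the identity $\|(\sum_n|x_n|^2)^{1/2}\|_p^2=\|\sum_n|x_n|^2\|_{p/2}$ reduce the claim to the dual Doob inequality $\|\sum_n\mathcal{E}_n(b_n)\|_q\leq c_q\|\sum_nb_n\|_q$ for positive $b_n$ and $q=p/2\geq1$; and that in turn follows from \eqref{duality between Lp(l1) and Lq(l8)}, the fact that $\|(b_n)\|_{L_q(\N;\ell_1)}=\|\sum_nb_n\|_{L_q(\N)}$ for positive entries, and the maximal inequality \eqref{maximal inequality}. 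All the individual steps check out, including the endpoint $q=1$ (where $q'=\infty$ and Doob is trivial, or where dual Doob is an identity by trace preservation). Within this paper's logical framework there is no circularity, since Doob's inequality is assumed independently; but be aware that your argument inverts the historical order --- Pisier and Xu proved Stein's inequality in 1997 by a different route, five years before Junge established Doob's inequality, and the implication in \cite{Jun02} runs in the direction you use. So your proposal is a legitimate alternative derivation from the paper's other quoted ingredient rather than a reconstruction of the cited proof; what it buys is self-containedness at the cost of resting on the deepest result in the toolbox.
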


\begin{theorem}\label{duality between Hp}
For any $1<p<\infty$, we have
\begin{equation}
(\mathcal{H}^c_p(\real,\M))^*=\mathcal{H}^c_{p'}(\real,\M),
\end{equation}
\end{theorem}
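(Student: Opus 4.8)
The plan is to exploit the isometric embedding $\Phi$ of (\ref{identify Hp as Lp(lc2)}), which realizes $\mathcal{H}^c_p(\real,\M)$ as a closed subspace of $L_p(\N;\ell^c_2(\mathcal{D}))$, and to transfer the whole statement to the Hilbert-valued duality of Lemma \ref{duality between Lp(H)} applied with $H=\ell_2(\mathcal{D})$ and the algebra $\N$. The starting point is the basic pairing: using the Calder\'on identity (\ref{Calderon identity}) and the orthonormality of the $w_I$'s, for $f,g\in S_{\N}$ one has
$$\tau\int g^*f\,dx=\sum_{I\in\mathcal{D}}\tau\big(\la g,w_I\ra^*\la f,w_I\ra\big)=\tau\int\la\Phi(g),\Phi(f)\ra\,dx,$$
so the bilinear form defining $l_g$ is exactly the duality pairing between $\Phi(g)\in L_{p'}(\N;\ell^c_2(\mathcal{D}))$ and $\Phi(f)\in L_p(\N;\ell^c_2(\mathcal{D}))$. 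Note also that $\Phi$ is isometric from $\mathcal{H}^c_q$ into $L_q(\N;\ell^c_2(\mathcal{D}))$, since $\sum_I|\Phi(f)_I|^2=S_c(f)^2$.

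For the easy inclusion $\mathcal{H}^c_{p'}\hookrightarrow(\mathcal{H}^c_p)^*$ I would combine this identity with Lemma \ref{duality between Lp(H)} to obtain
$$|l_g(f)|\le\|\Phi(g)\|_{L_{p'}(\N;\ell^c_2(\mathcal{D}))}\,\|\Phi(f)\|_{L_p(\N;\ell^c_2(\mathcal{D}))}=\|g\|_{\mathcal{H}^c_{p'}}\,\|f\|_{\mathcal{H}^c_p},$$
so every $g\in\mathcal{H}^c_{p'}$ induces $l_g\in(\mathcal{H}^c_p)^*$ with $\|l_g\|\le\|g\|_{\mathcal{H}^c_{p'}}$. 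For the converse, given $l\in(\mathcal{H}^c_p)^*$ I would extend it by Hahn--Banach along $\Phi$ to a norm-preserving functional on $L_p(\N;\ell^c_2(\mathcal{D}))$ and represent the latter, via Lemma \ref{duality between Lp(H)}, by some $g=(g_I)_I\in L_{p'}(\N;\ell^c_2(\mathcal{D}))$ with $\|g\|=\|l\|$, so that
$$l(f)=\tau\int\sum_{I\in\mathcal{D}}g_I^*\,\frac{\la f,w_I\ra}{|I|^{1/2}}\mathds{1}_I\,dx,\qquad f\in S_{\N}.$$
The natural candidate for the representing function is $\varphi=\Psi(g)$ with $\Psi$ as in (\ref{projection from Lc2 as H2}); orthonormality gives $\la\varphi,w_I\ra=|I|^{-1/2}\int_I g_I$, whence a short computation identical to the one in Theorem \ref{duality between H1 and BMO} yields $l=l_\varphi$ on the dense set $S_{\N}$.

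The main obstacle is the remaining norm estimate $\|\varphi\|_{\mathcal{H}^c_{p'}}\le c_{p'}\|l\|$, and this is exactly where the noncommutative Stein inequality (Lemma \ref{Stein's inequality}) enters. The key recognition is that the averaging built into $\Phi(\varphi)$ is a conditional expectation for the dyadic filtration: the $I$-component of $\Phi(\varphi)$ is $|I|^{-1}\big(\int_I g_I\big)\mathds{1}_I$, and setting $G_k=\sum_{|J|=2^{-k+1}}g_J\mathds{1}_J$, the disjointness of same-level dyadic intervals gives $\sum_{|I|=2^{-k+1}}\Phi(\varphi)_I=\mathcal{E}_k(G_k)$ and hence $\sum_I|\Phi(\varphi)_I|^2=\sum_k|\mathcal{E}_k(G_k)|^2$. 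Therefore $\|\varphi\|_{\mathcal{H}^c_{p'}}=\|(\mathcal{E}_k(G_k))_k\|_{L_{p'}(\N;\ell^c_2)}$, and Lemma \ref{Stein's inequality} bounds this by $c_{p'}\|(G_k)_k\|_{L_{p'}(\N;\ell^c_2)}$. Since $\sum_k|G_k|^2=\sum_I|g_I|^2\mathds{1}_I\le\sum_I|g_I|^2$ by disjointness and operator monotonicity, one gets $\|\varphi\|_{\mathcal{H}^c_{p'}}\le c_{p'}\|g\|=c_{p'}\|l\|$.

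Putting the two directions together yields $c_{p'}^{-1}\|\varphi\|_{\mathcal{H}^c_{p'}}\le\|l\|\le\|\varphi\|_{\mathcal{H}^c_{p'}}$, which is the asserted duality with equivalent norms. I would emphasize that the argument is uniform over the whole range $1<p<\infty$: both Lemma \ref{duality between Lp(H)} (with $H=\ell_2(\mathcal{D})$) and Lemma \ref{Stein's inequality} are available there, so no case distinction between $p<2$ and $p\ge2$ is needed, in contrast to Theorem \ref{duality between Hp and BMOp'} where Doob's inequality forced $1<p<2$. Finally, the row case follows by passing to adjoints, and the two-sided duality for $\mathcal{H}_p$ follows from the general fact that the dual of a sum is the intersection of the duals, exactly as at the end of Theorem \ref{duality between H1 and BMO}.
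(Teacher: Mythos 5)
Your proposal is correct and follows essentially the same route as the paper: the forward inclusion via the H\"older/duality inequality for $L_p(\N;\ell^c_2(\mathcal{D}))$ applied to $\Phi(f)$ and $\Phi(g)$, and the converse via Hahn--Banach extension along $\Phi$, representation by some $g\in L_{p'}(\N;\ell^c_2(\mathcal{D}))$, the projection $\varphi=\Psi(g)$, and the noncommutative Stein inequality. Your level-by-level identification of the averaging in $\Phi(\Psi(g))$ with the dyadic conditional expectations $\mathcal{E}_k(G_k)$ is just a slightly more careful writing of the step the paper performs when it applies Lemma \ref{Stein's inequality} to $(\mathcal{E}_I(g_I))_I$.
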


\begin{proof}
By a similar reason as in the corresponding part of the proof of
Theorem \ref{duality between H1 and BMO}, we can carry out the
following calculation, \be\begin{split}
|l_{\varphi}(f)|&=|\tau\int \varphi^*fdx|\\
&=\Big|\sum_n\tau\int \sum_{|I|=2^{-n+1}}\la\varphi,w_I\ra^*w_{I}
\sum_{|I'|=2^{-n+1}}\la
f,w_{I'}\ra w_{I'}dx\Big|\\
&=\Big|\sum_n\tau\int
\sum_{|I|=2^{-n+1}}\frac{\la\varphi,w_I\ra^*}{|I|^{\frac{1}{2}}}\mathds{1}_{I}\frac{\la
f,w_{I}\ra}{|I|^{\frac{1}{2}}}\mathds{1}_{I}dx\Big|\\
&\leq\big\|\big(\sum_{I\in\mathcal {D} }\frac{|\la
f,w_{I}\ra|^2}{|I|}\mathds{1}_{I}\big)^{\frac{1}{2}}\big\|_{L_{p}(\real,\M)}
\cdot\big\|\big(\sum_{I\in\mathcal {D}}\frac{|\la
\varphi,w_{I}\ra|^2}{|I|}\mathds{1}_{I}\big)^{\frac{1}{2}}\big\|_{L_{p'}(\real,\M)}.\\
\end{split}
\ee

Now, we turn to the proof of the inverse direction. Take a bounded
linear functional $l\in(\mathcal{H}^c_p(\real,\M))^*$, by the
embedding operator $\Phi$ and the Hahn-Banach extension theorem, $l$
extends to a bounded linear functional on $L_p(\N;\ell^c_2)$ with
the same norm. Thus by (\ref{duality between Lp(H)}), there exists a
sequence $g=(g_I)_I$ such that
$$\|g\|_{L_{q}(\N;l^c_2(\mathcal {D}))}=\|l\|$$
and
$$l(f)=\tau\int\sum_{I\in \mathcal {D}}g^*_p\frac{\la f,w_I\ra}{|I|^{\frac{1}{2}}}\mathds{1}_{I},\forall
f\in S_{\mathcal{N}}.$$ Now let $\varphi=\Psi(g)$ where $\Psi$ is
defined in (\ref{projection from Lc2 as H2}), then applying the
Stein inequality (\ref{Stein's inequality}) to the conditional
expectation
$$\mathcal{E}_I(h)=\sum_{J}\frac{1}{|J|}\int_Jh(y)dy\cdot\mathds{1}_J,$$ where $J$ is dyadic interval with the same
length as $I$, we get \be\begin{split}
\|\varphi\|_{\mathcal{H}^c_{p'}(\real,\M)}
&=\|\big(\sum_{I\in\mathcal
{D}}|\frac{1}{|I|}\int_{I}g_Idy\cdot\mathds{1}_{I}|^2\big)^{\frac{1}{2}}\|
_{L_{p'}(\N)}\\
&\leq\|\big(\sum_{I\in\mathcal
{D}}|\mathcal{E}_I(g_I)|^2\big)^{\frac{1}{2}}\|
_{L_{p'}(\N)}\\
&\leq c_{p'}\|\big(\sum_{I\in\mathcal
{D}}|g_I|^2\big)^{\frac{1}{2}}\|
_{L_{p'}(\N)}.\\
\end{split}
\ee By the orthogonality of the $w_I$'s, we have \be\begin{split}
l(f)=\tau\int\sum_{I\in\mathcal {D}}g^*_I\frac{\la
f,w_I\ra}{|I|^{\frac{1}{2}}}\mathds{1}_{I}
=\tau\int \varphi^*f,\\
\end{split}\ee
for all $f\in S_{\mathcal{N}}$.
\end{proof}

From the proof of the second part of Theorem \ref{duality between H1
and BMO}, Theorem \ref{duality between Hp and BMOp'} and Theorem
\ref{duality between Hp}, we state the boundedness of $\Psi$ as a
corollary.

\begin{corollary}\label{boundedness of Psi}
$\rm(i)$ Let $1<p<\infty$, $\Psi$ is a projection map from
$L_p(\N;\ell^c_2(\mathcal{D}))$ onto $\mathcal{H}^c_p(\real,\M)$ if
we identify the latter as a subspace of the former.

$\rm(ii)$ Let $2< p\leq\infty$, $\Psi$ is also a bounded map from
$L_p(\N;\ell^c_2(\mathcal{D}))$ to $L^c_p\MO(\real,\M)$.
\end{corollary}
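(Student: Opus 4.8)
The plan is to prove Corollary~\ref{boundedness of Psi} by extracting the two required mapping properties of $\Psi$ directly from the arguments already carried out in the duality theorems, rather than proving anything new from scratch. The key observation is that the converse directions of Theorems~\ref{duality between Hp}, \ref{duality between Hp and BMOp'}, and \ref{duality between H1 and BMO} each produced, for a given sequence $g=(g_I)_{I\in\mathcal D}\in L_q(\N;\ell^c_2(\mathcal D))$, an element $\varphi=\Psi(g)$ together with a norm estimate bounding $\|\Psi(g)\|$ in the target space by $c\,\|g\|_{L_q(\N;\ell^c_2(\mathcal D))}$. Isolating those estimates and discarding the functional-analytic wrapping (the Hahn--Banach extension and the identification of $l$ with $g$) gives exactly the two boundedness statements claimed.

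For part (i), fix $1<p<\infty$. The forward computation in the proof of Theorem~\ref{duality between Hp} shows via the operator-valued Calder\'on identity and the orthogonality of the $w_I$'s that $\Psi$ maps into $\mathcal H^c_p(\real,\M)$, and the norm estimate
\be
\|\Psi(g)\|_{\mathcal H^c_{p}(\real,\M)}\leq c_{p}\,\|(g_I)_I\|_{L_{p}(\N;\ell^c_2(\mathcal D))}
\ee
is precisely the Stein-inequality bound (Lemma~\ref{Stein's inequality}) applied to the dyadic conditional expectation $\mathcal E_I$, exactly as displayed in that proof with $p'$ replaced by $p$. That $\Psi$ is a \emph{projection} when $\mathcal H^c_p$ is regarded as a subspace of $L_p(\N;\ell^c_2(\mathcal D))$ via $\Phi$ follows by checking $\Psi\circ\Phi=\mathrm{id}$ on $S_\N$: applying $\Phi$ to $f$ yields the sequence $(|I|^{-1/2}\la f,w_I\ra\mathds 1_I)_I$, and feeding this into the definition~(\ref{projection from Lc2 as H2}) of $\Psi$ recovers $\sum_I \la f,w_I\ra w_I=f$ by the orthonormality $\|w\|_{L_2}=1$, so that $\int |I|^{-1/2}\mathds 1_I\,\Phi(f)_I=\la f,w_I\ra$. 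Density of $S_\N$ then extends this identity to all of $\mathcal H^c_p$, giving idempotence.

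For part (ii), let $2<p\leq\infty$. When $p<\infty$ this is read off from the converse direction of Theorem~\ref{duality between Hp and BMOp'}: that argument established, for $\varphi=\Psi(g)$, the chain of inequalities ending in
\be
\Big\|{\sup_n}^+\frac{1}{|I^x_n|}\sum_{I\subset I^x_n}|\la\varphi,w_I\ra|^2\Big\|_{L_{p'/2}(\N)}\leq c\,\big\|(g_I)_I\big\|^2_{L_{p'}(\N;\ell^c_2(\mathcal D))},
\ee
whose only inputs were the Kadison--Schwarz inequality and the maximal (Doob) inequality~(\ref{maximal inequality}); replacing $p'$ by $p$ throughout gives $\|\Psi(g)\|_{L^c_p\MO}\leq c\,\|g\|_{L_p(\N;\ell^c_2(\mathcal D))}$ for $2<p<\infty$. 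The endpoint $p=\infty$ is the $\BMO^c$ estimate from the converse part of Theorem~\ref{duality between H1 and BMO}, where the same Kadison--Schwarz step yielded $\|\sum_{I\subset J}|\la\varphi,w_I\ra|^2\|_\M\leq |J|\,\|(g_I)_I\|_{L_\infty(\N;\ell^c_2(\mathcal D))}$, i.e. $\|\Psi(g)\|_{\BMO^c}\leq\|g\|_{L_\infty(\N;\ell^c_2(\mathcal D))}$, which is the case $p=\infty$ since $L^c_\infty\MO=\BMO^c$. The only genuine point requiring care, and hence the main obstacle, is the well-definedness and idempotence claim of part~(i): one must verify that $\Psi$ is genuinely a left inverse of the embedding $\Phi$ on the dense subspace $S_\N$ and that the estimates above make $\Psi\circ\Phi=\mathrm{id}$ persist after completion, so that $\Psi$ is a bona fide bounded projection and not merely a bounded surjection; everything else is a transcription of estimates already proved.
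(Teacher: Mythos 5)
Your proposal is correct and follows essentially the same route as the paper, which derives the corollary precisely by reading off the Stein-inequality estimate from the converse direction of Theorem~\ref{duality between Hp} for part (i) and the Kadison--Schwarz/Doob estimates from the converse directions of Theorems~\ref{duality between Hp and BMOp'} and~\ref{duality between H1 and BMO} for part (ii). Your additional verification that $\Psi\circ\Phi=\mathrm{id}$ on $S_{\N}$ via the Calder\'on identity~(\ref{Calderon identity}) is a worthwhile explicit check of the projection claim that the paper leaves implicit.
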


Theorem \ref{duality between Hp and BMOp'} and Theorem \ref{duality
between Hp} immediately imply the following corollary:

\begin{corollary}\label{Hp=BMOp} Let $2<p<\infty$. Then
$$\mathcal{H}^c_p(\real,\M)=L^c_p\MO(\real,\M),\quad \forall 2<p<\infty$$
with equivalent norms.
\end{corollary}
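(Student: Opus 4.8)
The plan is to read the identity off directly from the two preceding duality theorems, exploiting the fact that both of them describe the dual of one and the \emph{same} space $\mathcal{H}^c_{p'}(\real,\M)$ through the \emph{same} pairing $l_\varphi(f)=\tau\int\varphi^*f$. So the heart of the matter is not a new estimate but a bookkeeping observation about conjugate indices.

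First I would fix $2<p<\infty$ and set $q=p'$, so that $1<q<2$ and $q'=p$. The key point is that $q$ lies simultaneously in the range $(1,2)$ required by Theorem \ref{duality between Hp and BMOp'} and in the range $(1,\infty)$ required by Theorem \ref{duality between Hp}; hence both results may be applied to the space $\mathcal{H}^c_q(\real,\M)$.

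Applying Theorem \ref{duality between Hp and BMOp'} with $q$ in place of $p$ gives $(\mathcal{H}^c_q(\real,\M))^*=L^c_{q'}\MO(\real,\M)=L^c_p\MO(\real,\M)$, the identification being $\varphi\mapsto l_\varphi$ with $c_q^{-1}\|\varphi\|_{L^c_p\MO}\leq\|l_\varphi\|_{(\mathcal{H}^c_q)^*}\leq\sqrt2\,\|\varphi\|_{L^c_p\MO}$. Applying Theorem \ref{duality between Hp} with the same index $q$ gives $(\mathcal{H}^c_q(\real,\M))^*=\mathcal{H}^c_{q'}(\real,\M)=\mathcal{H}^c_p(\real,\M)$, and the proof of that theorem shows that this identification is again realized by $\varphi\mapsto l_\varphi$, with $\|l_\varphi\|_{(\mathcal{H}^c_q)^*}$ equivalent to $\|\varphi\|_{\mathcal{H}^c_p}$.

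Finally I would combine the two descriptions. Since both realize a functional on $\mathcal{H}^c_q(\real,\M)$ as $l_\varphi(f)=\tau\int\varphi^*f$ for $f\in S_{\mathcal{N}}$, the assignment $\varphi\mapsto l_\varphi$ exhibits $\mathcal{H}^c_p(\real,\M)$ and $L^c_p\MO(\real,\M)$ as two descriptions of the same dual space; consequently the identity map on $S_{\mathcal{N}}$, which is dense in each of them, extends to an isomorphism. Chaining the two norm equivalences then yields $c^{-1}\|\varphi\|_{L^c_p\MO}\leq\|\varphi\|_{\mathcal{H}^c_p}\leq c\,\|\varphi\|_{L^c_p\MO}$, which is the asserted coincidence with equivalent norms. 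There is no serious obstacle here; the only step demanding care is to confirm that the two duality theorems use literally the same bracket $\tau\int\varphi^*f$, so that their dual realizations agree on the dense subspace $S_{\mathcal{N}}$ rather than merely being abstractly isometric.
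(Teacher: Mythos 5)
Your proposal is correct and is essentially the paper's own argument: the authors state that Theorem \ref{duality between Hp and BMOp'} and Theorem \ref{duality between Hp} ``immediately imply'' the corollary, which is precisely your bookkeeping with $q=p'$ and the observation that both dualities are realized by the same pairing $l_\varphi(f)=\tau\int\varphi^*f$ on the dense subspace $S_{\mathcal{N}}$. (The paper additionally offers a second, independent proof of the inclusion $L^c_p\MO\subset\mathcal{H}^c_p$ via a tent-space argument, but that is supplementary to the duality proof you give.)
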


However, for the part
$L^c_p\MO(\real,\M)\subset\mathcal{H}^c_p(\real,\M)$, we can give
another proof. The idea is essentially similar to that in
\cite{Mei07}, the good news is that in our case, the argument seems
very elegant. Now we give the detailed proof.

\begin{proof}
Our tent space is defined as \be T^c_p=\Big\{f=\{f_I\}_I\in
L_p(\M;\ell^c_2(\mathcal {D})):\quad\tau\int\big(\sum_{I\in \mathcal
{D}}\frac{f^2_I}{|I|}\mathds{1}_{I}\big)^{\frac{p}{2}}<\infty\Big\}\ee
We claim that every $\varphi\in L^c_p\MO(\real,\M)$ induces a
bounded linear functional on $T^c_{p'}$, \be
l_{\varphi}(f)=\tau\int\sum_{I\in \mathcal
{D}}\frac{\la\varphi,w_I\ra^*}{|I|^{\frac{1}{2}}}\mathds{1}_{I}\frac{f_I}
{|I|^{\frac{1}{2}}}\mathds{1}_{I}dx\ee and
$\|l_{\varphi}\|\leq\|\varphi\|_{L^c_p\MO(\real,\M)}$. The proof is
just the copy of the proof of the first part in the last theorem.
Now $T^c_{p'}$ is naturally embedded into
$L_{p'}(\N;\ell^c_2(\mathcal {D}))$ by
$(f_I)_I\rightarrow(\frac{f_I}{|I|^{\frac{1}{2}}}\mathds{1}_{I})_I$.
So by the Hahn-Banach extension theorem, $l_{\varphi}$ extends to an
bounded linear functional on $L_{p'}(\N;\ell^c_2(\mathcal {D}))$
with the same norm. Then by the duality between
$$(L_{p'}(\N;\ell^c_2(\mathcal {D})))^*=L_{p}(\N;\ell^c_2(\mathcal {D})).$$
there exists a unique $h=(h_I)_I$ such that
$\|h\|_{L_{p}(\N;\ell^c_2(\mathcal {D}))}\leq\|l_{\varphi}\|$ and
for $f=(f_I)_I\in T^c_{p'}$, \be
l_{\varphi}(f)=\tau\int\sum_{I\in\mathcal {D}}h^*_I\frac{f_I}
{|I|^{\frac{1}{2}}}\mathds{1}_{I}dx\ee So we get
$$\frac{\la\varphi,w_I\ra}{|I|^{\frac{1}{2}}}\mathds{1}_{I}=h_I,$$
thus,
\be\begin{split}\|\varphi\|_{\mathcal{H}^c_p}&=\Big\|\big(\sum_{I\in
\mathcal
{D}}\frac{\la\varphi,w_I\ra^*}{|I|^{\frac{1}{2}}}\mathds{1}_{I}\big)^{\frac{1}{2}}\Big\|
_{L_p(\N)}\\
&=\|h_I\|_{L_{p}(\N;\ell^c_2(\mathcal {D}))}\leq\|l_{\varphi}\|
\end{split}\ee
\end{proof}

\section{Interpolation}
This section is devoted to the interpolation of our wavelet Hardy
spaces. The interpolation results below will be needed in the next
section to compare our Hardy spaces with those of Mei.

\begin{lemma}\label{interpolation between Hp}
Let $1<p_0<p<p_1<\infty$, we have
\begin{equation}
[\mathcal{H}^c_{p_0}(\real,\M),\mathcal{H}^c_{p_1}(\real,\M)]_{\theta}=\mathcal{H}^c_{p}(\real,\M)
\end{equation}
with equivalent norms, where $\theta$ satisfies
$\frac{1}{p}=\frac{1-\theta}{p_0}+\frac{\theta}{p_1}$.
\end{lemma}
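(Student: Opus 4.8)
The plan is to realize each $\mathcal{H}^c_p(\real,\M)$ as a complemented subspace (a retract) of $L_p(\N;\ell^c_2(\mathcal{D}))$ in a manner compatible with the whole scale $1<p<\infty$, and then to transport the known interpolation identity for the latter spaces through the retraction. The two maps needed are already at hand: the embedding $\Phi$ of (\ref{identify Hp as Lp(lc2)}) and the projection $\Psi$ of (\ref{projection from Lc2 as H2}). Both are given by explicit formulas in the wavelet coefficients that do not depend on $p$, so they act consistently on the interpolation couples below.

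First I would record the three structural facts. By the very definition of the norm on $\mathcal{H}^c_p$, the map $\Phi$ is an isometry from $\mathcal{H}^c_p(\real,\M)$ into $L_p(\N;\ell^c_2(\mathcal{D}))$ for every $1\le p<\infty$ (indeed $|\frac{\la f,w_I\ra}{|I|^{\frac{1}{2}}}\mathds{1}_I|^2=\frac{|\la f,w_I\ra|^2}{|I|}\mathds{1}_I$, so the column norm of $\Phi(f)$ is exactly $\|S_c(f)\|_{L_p(\N)}$); in particular it is bounded at both endpoints $p_0,p_1$. By Corollary \ref{boundedness of Psi}(i), for $1<p<\infty$ the map $\Psi$ is bounded from $L_p(\N;\ell^c_2(\mathcal{D}))$ onto $\mathcal{H}^c_p(\real,\M)$; in particular at $p_0$ and $p_1$. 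Finally, a direct computation using the Calder\'on identity (\ref{Calderon identity}) gives $\Psi\circ\Phi=\mathrm{id}$ on $\mathcal{H}^c_p$: the $I$-th coordinate of $\Phi(f)$ is $\frac{\la f,w_I\ra}{|I|^{\frac{1}{2}}}\mathds{1}_I$, and feeding this into (\ref{projection from Lc2 as H2}) returns $\sum_{I}\la f,w_I\ra w_I=f$. Thus $\Phi\Psi$ is a bounded projection and $\mathcal{H}^c_p$ is a retract of $L_p(\N;\ell^c_2(\mathcal{D}))$, uniformly in $p$.

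With this structure in place I would invoke the standard retraction theorem for the complex method: since $\Phi$ carries the couple $(\mathcal{H}^c_{p_0},\mathcal{H}^c_{p_1})$ boundedly into $(L_{p_0}(\N;\ell^c_2),L_{p_1}(\N;\ell^c_2))$ and $\Psi$ carries it back with $\Psi\Phi=\mathrm{id}$, it follows that $[\mathcal{H}^c_{p_0},\mathcal{H}^c_{p_1}]_\theta=\Psi\big([L_{p_0}(\N;\ell^c_2),L_{p_1}(\N;\ell^c_2)]_\theta\big)$ with equivalent norms. The endpoint interpolation is supplied by Lemma \ref{interpolation between Lp(H)}, applied with $\M$ replaced by $\N$ and $H=\ell_2(\mathcal{D})$ (so that $H^c=\ell^c_2(\mathcal{D})$), giving $[L_{p_0}(\N;\ell^c_2),L_{p_1}(\N;\ell^c_2)]_\theta=L_p(\N;\ell^c_2)$. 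Since $\Psi\big(L_p(\N;\ell^c_2)\big)=\mathcal{H}^c_p$, combining the two displays yields $[\mathcal{H}^c_{p_0},\mathcal{H}^c_{p_1}]_\theta=\mathcal{H}^c_p(\real,\M)$ with equivalent norms, as claimed.

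The genuinely substantial input is the boundedness of $\Psi$ on the whole range $1<p<\infty$, namely Corollary \ref{boundedness of Psi}, which itself rests on the duality Theorems \ref{duality between Hp and BMOp'} and \ref{duality between Hp}; once that is granted, the interpolation statement becomes a soft, formal consequence of the retract theorem together with the Hilbert-valued interpolation of Lemma \ref{interpolation between Lp(H)}. The only point deserving care is the compatibility of $\Phi$ and $\Psi$ with the interpolation couples, i.e.\ that they are well defined on the relevant sum spaces and agree on overlaps; this is immediate because both are defined coordinatewise by the same $p$-independent wavelet formulas, so I expect no serious obstacle beyond this bookkeeping.
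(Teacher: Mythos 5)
Your argument is correct and is essentially the paper's own proof, just written out in more detail: the paper likewise obtains the inclusion $[\mathcal{H}^c_{p_0},\mathcal{H}^c_{p_1}]_{\theta}\subset\mathcal{H}^c_p$ from the isometric embedding $\Phi$ together with the interpolation of the spaces $L_p(\N;\ell^c_2(\mathcal{D}))$, and the reverse inclusion from the boundedness of the projection $\Psi$ in Corollary \ref{boundedness of Psi}. Your explicit verification that $\Psi\circ\Phi=\mathrm{id}$ and the appeal to the retract theorem make precise what the paper leaves implicit, so there is nothing to correct.
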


\begin{proof} The embedding map $\Phi$ yields
$$[\mathcal{H}^c_{p_0},\mathcal{H}^c_{p_1}]_{\theta}\subset\mathcal{H}^c_p.$$
On the other hand, it is the boundedness of the projection map
$\Psi$ from  $L_p(\N;\ell^c_2(\mathcal {D}))$ to
$\mathcal{H}^c_p(\real,\M)$ stated in Corollary \ref{boundedness of
Psi} that yields the inverse direction.
\end{proof}

\begin{theorem}\label{interpolation between Hp and BMO}
Let $1\leq q<p<\infty$, we have
\begin{equation}
[\mathcal{BMO}^c(\real,\M),\mathcal{H}^c_{q}(\real,\M)]_{\frac{q}{p}}=\mathcal{H}^c_{p}(\real,\M)
\end{equation}
with equivalent norms.
\end{theorem}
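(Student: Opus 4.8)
The statement asserts a complex-interpolation identity between $\mathcal{BMO}^c$ (the $p=\infty$ endpoint) and $\mathcal{H}^c_q$. The plan is to realize both spaces inside a single ambient space where interpolation is already understood, so that the identity is inherited through bounded embedding and bounded projection maps — exactly the mechanism used in Lemma \ref{interpolation between Hp}. The natural ambient candidate is the $\ell^c_\infty$-valued space $L_p(\mathcal{N}\bar\otimes B(\ell_2(\mathcal{D}));\ell^c_\infty)$, since by definition $\mathcal{H}^c_p=L^c_p\MO$ for $2<p<\infty$ (Corollary \ref{Hp=BMOp}) and the $L^c_p\MO$ norms were seen to come from exactly this space, while $\mathcal{BMO}^c=L^c_\infty\MO$ is its $p=\infty$ endpoint. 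Lemma \ref{interpolation between Lp(lc8)} supplies the interpolation of the ambient $\ell^c_\infty$-scale, which is what makes this route viable.

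\medskip

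\noindent\textbf{Key steps.} First I would reduce to the case $2<q$: by reiteration of complex interpolation together with Lemma \ref{interpolation between Hp}, one can move the lower endpoint $q$ up into the range $(2,\infty)$ where $\mathcal{H}^c_q=L^c_q\MO$ holds with equivalent norms, so it suffices to prove $[\mathcal{BMO}^c,L^c_q\MO]_{q/p}=L^c_p\MO$. Second, I would set up the two maps. The embedding sends $\varphi$ to the array $\big(\frac{1}{|I^x_k|}\sum_{I\subset I^x_k}|\langle\varphi,w_I\rangle|^2\big)_k$ sitting inside the $\ell^c_\infty$-space; this is bounded on each endpoint by the very definition of the $L^c_p\MO$ norms, hence on the interpolation scale, giving
$$[\mathcal{BMO}^c,L^c_q\MO]_{q/p}\subset L^c_p\MO.$$
Third, for the reverse inclusion I would use the projection map $\Psi$ of \eqref{projection from Lc2 as H2}, whose boundedness from $L_p(\N;\ell^c_2(\mathcal{D}))$ into $L^c_p\MO$ for $2<p\le\infty$ is recorded in Corollary \ref{boundedness of Psi}(ii); composing the interpolated $\Psi$ with the embedding and invoking Lemma \ref{interpolation between Lp(lc8)} (applied with the endpoints $\infty$ and $q/2$ for the squared quantities, i.e. $p_0=q$, $p_1=\infty$ on the original scale) retracts $L^c_p\MO$ back inside the interpolation space. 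Chaining the two inclusions yields the equality with equivalent norms.

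\medskip

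\noindent\textbf{Main obstacle.} The delicate point is the compatibility of the two realizations: one must check that the embedding-into-$\ell^c_\infty$ picture (natural for $L^c_p\MO$) and the $L_p(\N;\ell^c_2)$ picture (natural for $\Psi$) describe the \emph{same} interpolation couple, so that $\Psi$ genuinely acts as a retraction with the embedding as its section on the whole scale, not merely at the endpoints. Verifying that $\Psi\circ\Phi$ is (up to the conditional-expectation truncation implicit in the $\MO$ averages) the identity on each endpoint — and that the endpoint constants combine correctly under the Calderón interpolation method — is where the real work lies; the arithmetic of the interpolation parameter ($\theta=q/p$ matching $\frac{1}{p}=\frac{1-\theta}{\infty}+\frac{\theta}{q}$) is routine once this retract structure is in place. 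A secondary technical nuisance is that $\mathcal{BMO}^c$ is defined modulo constants, so I would work on the quotient or restrict to $S_{\N}$ and pass to the completion to avoid spurious obstructions at the $\infty$-endpoint.
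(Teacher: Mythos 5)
Your first inclusion, $[\mathcal{BMO}^c,L^c_q\MO]_{q/p}\subset L^c_p\MO$ via the isometric realization inside the $\ell^c_\infty$-valued space and Lemma \ref{interpolation between Lp(lc8)}, is exactly the paper's argument, and your reduction from general $1\le q$ to $2<q$ follows the paper's Steps 2--3 in spirit --- except that plain reiteration is not enough to introduce the new endpoint $\mathcal{BMO}^c$ starting from identities among interior exponents; the correct tool, and the one the paper uses, is Wolff's interpolation theorem.

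The genuine gap is in your reverse inclusion $L^c_p\MO\subset[\mathcal{BMO}^c,L^c_q\MO]_{q/p}$. A retraction argument needs a section that is bounded at \emph{both} endpoints together with a projection bounded at both endpoints, all on one and the same ambient couple. If the ambient couple is $\big(L_\infty(\N;\ell^c_2(\mathcal{D})),L_q(\N;\ell^c_2(\mathcal{D}))\big)$, then $\Psi$ is indeed a bounded projection at both endpoints by Corollary \ref{boundedness of Psi}, but the section $\Phi$ of \eqref{identify Hp as Lp(lc2)} fails at the upper endpoint: $\|\Phi(\varphi)\|_{L_\infty(\N;\ell^c_2)}$ is the $L_\infty$-norm of the square function of $\varphi$, which is not controlled by $\|\varphi\|_{\mathcal{BMO}^c}$ (already false for scalar $BMO$: take $\varphi=\sum_{k\le N}|I_k|^{1/2}w_{I_k}$ over a decreasing chain of dyadic intervals). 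If instead the ambient couple is the $\ell^c_\infty$-valued one, then $\mathcal{T}$ of \eqref{identify BMOp as Lp(lc8)} is a bounded section at both endpoints, but there is no bounded projection back onto its range --- Corollary \ref{boundedness of Psi}(ii) concerns $\Psi$ defined on $L_p(\N;\ell^c_2)$, not on $L_p(\cdot\,;\ell^c_\infty)$, and the two ambient scales cannot be mixed inside a single retract diagram. So the ``compatibility of the two realizations'' you flag as a technical nuisance is in fact the obstruction, and checking $\Psi\circ\Phi=\mathrm{id}$ does not remove it. The paper circumvents this entirely by proving the inclusion by duality: $\Phi$ is isometric on the predual scale $\mathcal{H}^c_{r}$ for all finite $r$, so \eqref{interpolation between Lp(H)} gives $[\mathcal{H}^c_1,\mathcal{H}^c_{q'}]_{q/p}\subset\mathcal{H}^c_{p'}$, and dualizing via Theorem \ref{duality between H1 and BMO}, Theorem \ref{duality between Hp and BMOp'} and Corollary \ref{Hp=BMOp} yields $L^c_p\MO\subset[\mathcal{BMO}^c,L^c_q\MO]_{q/p}$. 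Your third step needs to be replaced by this (or an equivalent) duality argument.
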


\begin{proof} We will prove the theorem by a general strategy as appeared in \cite{Mus03}.

Step 1: We prove the conclusion for $2<q<p<\8$:
\begin{equation}\label{interpolation between Hp and BMO for p>2}
[\BMO^c(\real,\M),\mathcal{H}^c_q(\real,\M)]_{\frac{q}{p}}=\mathcal{H}^c_p(\real,\M).
\end{equation}
The identity  can be seen easily from the following two inclusions.
On one hand, the operator $\Phi$ which in (\ref{identify Hp as
Lp(lc2)}), together with (\ref{interpolation between Lp(H)}) yields
$$[\mathcal{H}^c_1(\real,\M),\mathcal{H}^c_{q'}(\real,\M)]_{\frac{q}{p}}\subset\mathcal{H}^c_{p'}(\real,\M).$$
Then by duality and Corollary \ref{Hp=BMOp}, we have
\begin{equation}
L^c_p\MO(\real,\M)\subset[\BMO^c(\real,\M),L^c_{q}\MO(\real,\M)]_{\frac{q}{p}}.
\end{equation}
On the other hand, the operator $\mathcal{T}$ identifying
$L^c_p\MO(\real,\M)$ as a subspace of
$L_p(L_{\8}(\mathcal{N}\bar{\otimes}B(\ell_2(\mathcal
{D}));\ell^c_{\8})$ defined by
\begin{equation}\label{identify BMOp as Lp(lc8)}
\mathcal{T}(\varphi)={\la
f,w_I\ra}{|I^t_k|^{-\frac{1}{2}}}\mathds{1}_{I\subset
I^t_k}(I)\otimes e_{I,1},
\end{equation} together with Lemma
\ref{interpolation between Lp(lc8)} yields
\begin{equation}
[\BMO^c(\real,\M),L^c_q\MO(\real,\M)]_{\frac{q}{p}}\subset
L^c_p\MO(\real,\M).
\end{equation}

Step 2: we prove the conclusion for $1<q<p<\8$. This step can be
divided into two substeps.

Substep 21: $p>2$. Let $p<s<\8$. By Step 1, we have
$$[\BMO^c(\real,\M),\mathcal{H}^c_p(\real,\M)]_{\frac{p}{s}}=\mathcal{H}^c_s(\real,\M).$$
On the other hand, by Theorem \ref{interpolation between Hp}, we
have
$$[\mathcal{H}^c_{q},\mathcal{H}^c_{s}]_{\theta}=\mathcal{H}^c_{p},$$
where(and in the rest of the paper) $\theta$ denote the
interpolation parameter. Then Wolff's interpolation theorem yields
the result.

Substep 22: $p\leq2$. Let $s>2$, then by Substep 21, we have
$$[\BMO^c(\real,\M),\mathcal{H}^c_p(\real,\M)]_{\frac{p}{s}}=\mathcal{H}^c_s(\real,\M).$$
Then together with Lemma \ref{interpolation between Hp}, Wolff's
interpolation theorem yields the result.

Step 3: we prove the conclusion for $1=q<p<\8$. Take $s>\max(p,2)$.
By Step 2 and duality \cite[Theorem 4.3.1]{BeLo76}, we get
$$[\mathcal{H}^c_{1},\mathcal{H}^c_{s}]_{\theta}=\mathcal{H}^c_{p}.$$
Then together with Step 2, Wolff's interpolation yields the
conclusion.
\end{proof}

\begin{remark}
If one can directly prove Lemma \ref{interpolation between Hp} for
$p_0=1$, we can prove the above theorem without the help of
$L^c_p\MO(\real,\M)$ for $2<p<\infty$ as carried out in \cite{BCPY},
where one needs an auxiliary space.
\end{remark}

\begin{theorem}\label{Birkhold-Gundy inequality}
For $1<p<\infty$, we have
$$\mathcal{H}_p(\real,\M)=L_p(\N)$$
with equivalent norms.
\end{theorem}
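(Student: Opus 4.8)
The plan is to prove the two-sided estimate $\|f\|_{L_p(\N)}\simeq_p\max(\|S_c(f)\|_{L_p(\N)},\|S_r(f)\|_{L_p(\N)})$ for $2\le p<\infty$ and then to deduce the range $1<p<2$ by duality. For that deduction I would use that the dual of an intersection is the sum, together with Theorem \ref{duality between Hp} (giving $(\mathcal{H}^c_p)^*=\mathcal{H}^c_{p'}$) and the self-duality $(L_p(\N))^*=L_{p'}(\N)$: an inclusion $L_{p'}(\N)\hookrightarrow\mathcal{H}_{p'}$ or $\mathcal{H}_{p'}\hookrightarrow L_{p'}(\N)$ at $p'\ge2$ transposes to the reverse inclusion of the conjugate spaces $\mathcal{H}^c_p+\mathcal{H}^r_p=\mathcal{H}_p$ and $L_p(\N)$, so once $\mathcal{H}_{p'}=L_{p'}(\N)$ is known for all $p'\ge2$ the statement for $1<p<2$ follows. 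The base point $p=2$ is immediate: by orthonormality of the $w_I$ and Parseval, $\|S_c(f)\|_{L_2(\N)}^2=\sum_I\tau(|\la f,w_I\ra|^2)=\|f\|_{L_2(\N)}^2$, and likewise for $S_r$, so $\mathcal{H}_2=L_2(\N)$ isometrically.

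For the easy inequality $\max(\|S_c(f)\|_p,\|S_r(f)\|_p)\le c_p\|f\|_p$ with $p\ge2$, I would argue by interpolation. First establish the endpoint embedding $\M=L_\infty(\N)\hookrightarrow\BMO^c(\real,\M)$ and its row analogue: for a fixed dyadic $J$ one splits $f=f\mathds{1}_{3J}+f\mathds{1}_{(3J)^c}$, bounds the local part by the $\M$-valued Parseval identity $\sum_{I\subset J}|\la f\mathds{1}_{3J},w_I\ra|^2\le\int_{3J}|f|^2\le 3|J|\,\|f\|_{\M}^2$, and controls the far part through the decay $|w(x)|\precsim(1+|x|)^{-m}$. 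Interpolating this embedding with the isometry $L_2(\N)=\mathcal{H}^c_2(\real,\M)$ via Theorem \ref{interpolation between Hp and BMO} (with $q=2$) and the classical identity $[\M,L_2(\N)]_{2/p}=L_p(\N)$, the identity map extends to a bounded inclusion $L_p(\N)\hookrightarrow\mathcal{H}^c_p(\real,\M)$ for $2\le p<\infty$; the row case is symmetric, yielding $L_p(\N)\hookrightarrow\mathcal{H}_p$.

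The hard inequality $\|f\|_p\le c_p\max(\|S_c(f)\|_p,\|S_r(f)\|_p)$ for $p\ge2$ is where the real difficulty lies, and this is the step I expect to be the main obstacle. It cannot be extracted from the interpolation and duality results alone, since those only relate each inclusion at $p$ to its transpose at $p'$, and no second interpolation endpoint is available (the natural candidate would be the $H_1$ decomposition, which is excluded both because $p=1$ is out of range and because it is itself the hard endpoint). I would therefore prove it directly, adapting the noncommutative martingale Burkholder--Gundy argument. Writing $d_kf=\sum_{|I|=2^{-k+1}}\la f,w_I\ra w_I$, cross-scale orthonormality of the wavelets gives the clean expansion $\tau\int g^*f=\sum_k\tau\int(d_kg)^*(d_kf)$, which one pairs against a test function $g$ with $\|g\|_{p'}\le1$ and estimates after separating the column and row contributions, using the noncommutative Stein inequality (Lemma \ref{Stein's inequality}). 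The essential obstruction, and the point demanding genuinely new work compared with the martingale case, is that $d_kf$ is \emph{not} measurable with respect to the dyadic filtration (the $w_I$ are smooth and non-local), so the martingale orthogonality is only approximate; controlling the resulting off-diagonal terms through the regularity and decay of $w$ is the crux. Once this estimate is secured, both inclusions hold at every $p\ge2$, and transposing them yields $\mathcal{H}_p=L_p(\N)$ for $1<p<2$ as explained above.
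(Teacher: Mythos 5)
Your architecture is sound in two of its three pieces, but the third --- the inequality $\|f\|_{L_p(\N)}\le c_p\max(\|S_c(f)\|_{L_p(\N)},\|S_r(f)\|_{L_p(\N)})$ for $2\le p<\infty$ --- is never actually proved, and you say so yourself. What you offer there is a plan (``adapt the martingale Burkholder--Gundy argument'', ``control the off-diagonal terms through the decay of $w$'') rather than an argument, and the plan contains a second hidden difficulty besides the non-adaptedness of $d_kf$: after writing $\tau\int g^*f=\sum_k\tau\int(d_kg)^*(d_kf)$ with $\|g\|_{p'}\le1$ and $p'\le2$, ``separating the column and row contributions'' of $g$ requires a decomposition $\la g,w_I\ra|I|^{-1/2}\mathds{1}_I=F_{c,I}+F_{r,I}$ with $\|(F_{c,I})\|_{L_{p'}(\N;\ell^c_2)}+\|(F_{r,I})\|_{L_{p'}(\N;\ell^r_2)}\lesssim\|g\|_{p'}$, and this Khintchine-type lower estimate at an exponent below $2$ is exactly as hard as the inequality you are trying to prove (it is the hard half of the $1<p<2$ case). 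So this step is a genuine gap, not a routine verification; you are right that it cannot be squeezed out of the duality and interpolation theorems, but you have not supplied the missing input.

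The paper closes precisely this gap with one observation that makes the whole range $2\le p<\infty$ (both directions at once) essentially a two-line argument: $L_p(\M)$ is a UMD space and $(w_I)_I$ is a complete orthonormal system, so by the vector-valued wavelet theorem of Hyt\"onen--Salinas--Viviani one has $\|f\|_{L_p(\N)}\simeq\big(\mathbb{E}\big\|\sum_I\varepsilon_I\la f,w_I\ra|I|^{-1/2}\mathds{1}_I\big\|_{L_p(\N)}^p\big)^{1/p}$, and the noncommutative Khintchine inequality converts this randomized norm into $\max(\|S_c(f)\|_p,\|S_r(f)\|_p)$ for $p\ge2$ and into the infimum over decompositions for $p<2$; the latter, followed by $\Psi$ and Lemma \ref{Stein's inequality}, is exactly the decomposition needed for $L_p(\N)\hookrightarrow\mathcal{H}_p$ when $p<2$. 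The non-adaptedness of the wavelets that you identify as the crux is precisely what the UMD randomization absorbs, so no almost-orthogonality estimates are required. The parts of your proposal that do work are genuine alternatives to the paper's route: obtaining $L_p(\N)\hookrightarrow\mathcal{H}^c_p$ for $p\ge2$ from $\N\hookrightarrow\BMO^c(\real,\M)$ and Theorem \ref{interpolation between Hp and BMO} with $q=2$ (the paper gets this instead from UMD plus Khintchine, so it never needs the $L_\infty\to\BMO$ endpoint here), and reducing $1<p<2$ to $p'\ge2$ by dualizing the intersection into the sum via Theorem \ref{duality between Hp} (the paper proves the two inclusions for $p<2$ directly, by pairing and by the explicit decomposition). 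Neither of these, however, substitutes for the missing lower estimate at $p\ge2$, on which everything else in both your argument and the paper's ultimately rests.
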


\begin{proof} There are several ways to prove this result. One can prove it by the
strategy in \cite{PiXu97} together with Stein's inequality
(\ref{Stein's inequality}). Here, we just use the fact that
$L_p(\M)$ with $1<p<\infty$ is a UMD space and our $(w_I)_I$ is an
complete orthonormal basis. So by Theorem 3.8 in \cite{HSV}, we have
$$\|f\|_{L_p(\N)}\simeq\Big(\mathbb{E}\Big\|\sum_{I\in\mathcal
{D}}\varepsilon_I\frac{\la f,w_I\ra}{|I|^{\frac{1}{2}}}
\mathds{1}_{I}\Big\|^p_{L_p(\N)}\Big)^{\frac{1}{p}}.$$ Then we
complete the proof for $2\leq p<\infty$ by Khintchine's
inequalities. Now, let us prove the case $1<p<2$. Let
$f\in\mathcal{H}_p(\real,\M)$, then for any $\epsilon>0$, by the
definition of $\mathcal{H}_p(\real,\M)$, there exists a
decomposition $f=f_c+f_r$ such that
$$\|f_c\|_{\mathcal{H}^c_p(\real,\M)}+\|f_r\|_{\mathcal{H}^r_p(\real,\M)}\leq\|f\|_{\mathcal{H}_p(\real,\M)}+\epsilon.$$
Take any $g\in L_{p'}(\mathcal{N})$, by the results for $p'>2$, the
operator-valued Calder\'{o}n identity (\ref{Calderon identity})
yields
\begin{align*}
|\tau\int gf^*|&=|\sum_{I\in\mathcal{D}}\tau\int \frac{\la
g,w_I\ra}{|I|^{\frac{1}{2}}}\mathds{1}_I\cdot\frac{\la
f^*,w_I\ra}{|I|^{\frac{1}{2}}}\mathds{1}_I|\\
&\leq|\sum_{I\in\mathcal{D}}\tau\int \frac{\la
g,w_I\ra}{|I|^{\frac{1}{2}}}\mathds{1}_I\cdot\frac{\la
f_c^*,w_I\ra}{|I|^{\frac{1}{2}}}\mathds{1}_I|\\
&\qquad+|\sum_{I\in\mathcal{D}}\tau\int \frac{\la
g,w_I\ra}{|I|^{\frac{1}{2}}}\mathds{1}_I\cdot\frac{\la
f_r^*,w_I\ra}{|I|^{\frac{1}{2}}}\mathds{1}_I|\\
&\leq\|S_c(g)\|_{L_{p'}(\mathcal{N})}\|S_c(f_c)\|_{L_{p}(\mathcal{N})}
+|S_r(g)\|_{L_{p'}(\mathcal{N})}\|S_r(f_r)\|_{L_{p}(\mathcal{N})}\\
&\leq
c_{p'}\|g\|_{L_{p'}}(\|f\|_{\mathcal{H}_p(\real,\M)}+\epsilon).
\end{align*}
Taking $\sup$ and let $\epsilon\rightarrow0$, we get the required
result.

Finally, we prove the inverse inequality. Let $f\in
L_p(\mathcal{N})$, by duality, we can find two sequences of
functions $(F_{c,I})_I\in L_p(\mathcal{N};\ell^c_2(\mathcal{D}))$
and $(F_{r,I})_I\in L_p(\mathcal{N};\ell^r_2(\mathcal{D}))$ such
that $F_{c,I}+F_{r,I}=\la f,w_I\ra|I|^{-\frac{1}{2}}\mathds{1}_I$
and
$$\|(F_{c,I})_I\|_{L_p(\mathcal{N};\ell^c_2(\mathcal{D}))}+\|(F_{r,I})_I\|_{L_p(\mathcal{N};\ell^r_2(\mathcal{D}))}
\leq\|f\|_{L_p(\mathcal{N})}.$$ Let $f_c=\Psi(({F_{c,I}})_I)$ and
$f_r=\Psi(({F_{r,I}})_I)$, by identity (\ref{Calderon identity}), we
have $f=f_c+f_r$. On the other hand, by the Stein inequality
(\ref{Stein's inequality}), we have
$\|f_c\|_{\mathcal{H}^c_p(\real,\M)}\leq
\|(F_{c,I})_I\|_{L_p(\mathcal{N};\ell^c_2(\mathcal{D}))}$ and
$\|f_r\|_{\mathcal{H}^r_p(\real,\M)}\leq
\|(F_{r,I})_I\|_{L_p(\mathcal{N};\ell^r_2(\mathcal{D}))}$. So we
have found the desired decomposition of $f$.
\end{proof}

\begin{theorem}\label{interpolation between Lp Hp and BMO}
The following results hold with equivalent norms:

$\rm(i)$ Let $1\leq q<p<\infty$, we have
\begin{equation}\label{interpolation between Lp and BMO}
[\mathcal{BMO}(\real,\M),L_q(\N)]_{\frac{q}{p}}=L_p(\N).
\end{equation}

$\rm(ii)$ Let $1<q<p\leq\infty$, we have
\begin{equation}\label{interpolation between H1 and Lp}
[\mathcal{H}_1(\real,\M),L_p(\N)]_{\frac{p'}{q'}}=L_q(\N).
\end{equation}

$\rm(iii)$ Let $1<p<\infty$, we have
\begin{equation}\label{interpolation between H1 and BMO}
[\mathcal{BMO}(\real,\M),\mathcal{H}_1(\real,\M)]_{\frac{1}{p}}=L_p(\N).
\end{equation}

\end{theorem}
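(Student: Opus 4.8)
The plan is to single out part (iii), $[\mathcal{BMO}(\real,\M),\mathcal{H}_1(\real,\M)]_{1/p}=L_p(\N)$, as the central statement, and to deduce (i) and (ii) from it. Once (iii) is known, part (i) will follow by reiteration: for $1<q<\infty$ Theorem \ref{Birkhold-Gundy inequality} gives $L_q(\N)=\mathcal{H}_q(\real,\M)$, and (iii) identifies this with $[\mathcal{BMO},\mathcal{H}_1]_{1/q}$, so the reiteration theorem (or Wolff's theorem, to accommodate the non-reflexive endpoint $\mathcal{BMO}$) yields $[\mathcal{BMO},L_q]_{q/p}=[\mathcal{BMO},[\mathcal{BMO},\mathcal{H}_1]_{1/q}]_{q/p}=[\mathcal{BMO},\mathcal{H}_1]_{(q/p)(1/q)}=[\mathcal{BMO},\mathcal{H}_1]_{1/p}=L_p$. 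The genuine endpoint $q=1$ of (i), which involves $L_1(\N)$ rather than $\mathcal{H}_1$, is handled separately, using the continuous inclusion $\mathcal{H}_1\hookrightarrow L_1$ for one direction and duality for the other. Part (ii) will then follow from (i) by duality: Theorem \ref{duality between H1 and BMO} gives $(\mathcal{BMO})^*=\mathcal{H}_1$, one has $(L_q)^*=L_{q'}$, and the duality theorem for complex interpolation \cite{BeLo76} applies since $L_q(\N)$ is reflexive for $1<q<\infty$. The index bookkeeping matches the exponent $q/p$ of (i) with the exponent $p'/q'$ of (ii) (set $a=p'$, $b=q'$), and sends the $q=1$ endpoint of (i) to the $p=\infty$ endpoint of (ii), consistently with $(L_1)^*=L_\infty$.

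To prove (iii) I would feed the machinery already developed for the column and row pieces. The inputs are the column identity $[\mathcal{BMO}^c,\mathcal{H}^c_1]_{1/p}=\mathcal{H}^c_p$ of Theorem \ref{interpolation between Hp and BMO}, its row analogue, and the Burkholder--Gundy identification $\mathcal{H}_p=L_p$ of Theorem \ref{Birkhold-Gundy inequality}, together with the structural facts $\mathcal{BMO}=\mathcal{BMO}^c\cap\mathcal{BMO}^r$, $\mathcal{H}_1=\mathcal{H}^c_1+\mathcal{H}^r_1$, and $L_p=\mathcal{H}^c_p\cap\mathcal{H}^r_p$ for $p\geq 2$. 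It is convenient to establish the range $p\geq 2$ first. The organizing idea is to realize each full space as a complemented subspace (a retract) of a concrete ``big'' space through the maps of Sections 2--3: $\mathcal{H}^c_p$ embeds into $L_p(\N;\ell^c_2(\mathcal{D}))$ with bounded projection $\Psi$ by Corollary \ref{boundedness of Psi}, and $\mathcal{BMO}^c$ embeds into the big $\ell^c_\infty$-space via the operator $\mathcal{T}$ of (\ref{identify BMOp as Lp(lc8)}); the row pieces are symmetric, and the sum/intersection structures pass to $\ell_1$- and $\ell_\infty$-direct sums of these big spaces. On the big spaces the interpolation is already known, from (\ref{interpolation between Lp(H)}) for the $\ell^c_2$-pieces and from Lemma \ref{interpolation between Lp(lc8)} for the $\ell^c_\infty$-pieces, and the retract principle then transports these identities to the Hardy and BMO spaces.

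The hard part is the mismatch between $\cap$ and $+$: $\mathcal{BMO}$ is an intersection of column and row, whereas $\mathcal{H}_1$ is a sum, and complex interpolation does not commute with these operations in general. This difficulty is concentrated entirely in the sum endpoint $\mathcal{H}_1=\mathcal{H}^c_1+\mathcal{H}^r_1$, because a sum admits no bounded projection onto a single summand, so the naive couple map into the column couple $(\mathcal{BMO}^c,\mathcal{H}^c_1)$ fails on that side. My plan to circumvent this is to obtain one inclusion of (iii) from the retract realization above, and the reverse inclusion by duality, exploiting that the $*$-operation simultaneously exchanges $\cap\leftrightarrow +$ and, by Theorem \ref{duality between H1 and BMO}, exchanges $\mathcal{BMO}\leftrightarrow\mathcal{H}_1$. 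Thus the awkward sum-side inclusion for one pair of indices is the adjoint of the more accessible intersection-side inclusion for the dual indices, and the reflexivity of $L_p$ for $1<p<\infty$ makes passing back through the bidual harmless. Once (iii) is in hand for $p\geq 2$, the remaining range $1<p<2$, together with all intermediate parameters, is filled in by Wolff's interpolation theorem, chaining a $p\geq 2$ identity with a pure scale identity $[\mathcal{H}_1,L_s]=L_t$ produced en route, exactly as in the three-step scheme used in the proof of Theorem \ref{interpolation between Hp and BMO}.

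With (iii) established, the deductions are routine. Part (i) for $1<q<p<\infty$ comes from the reiteration computation above; its $q=1$ endpoint is obtained by combining $\mathcal{H}_1\hookrightarrow L_1$ (which yields $L_p\subseteq[\mathcal{BMO},L_1]_{1/p}$ from (iii)) with the reverse inclusion coming by duality from the $p=\infty$ case of (ii). Part (ii) in full is then the dual of (i) via Theorem \ref{duality between H1 and BMO} and \cite{BeLo76}, the reflexivity of the intermediate $L_q(\N)$ guaranteeing that the dual of the interpolation space is the interpolation of the duals. The single genuine obstacle throughout is the sum/intersection incompatibility embodied by $\mathcal{H}_1=\mathcal{H}^c_1+\mathcal{H}^r_1$ versus $\mathcal{BMO}=\mathcal{BMO}^c\cap\mathcal{BMO}^r$; everything else is reiteration, duality, and Wolff's theorem applied to the column and row identities already proved.
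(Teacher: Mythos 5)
Your plan inverts the paper's logical order: you make (iii) the primitive statement and derive (i) by reiteration and (ii) by duality, whereas the paper proves (i) first, gets (ii) from (i) by duality, and only then reaches (iii) by Wolff's theorem from (i) and (ii). This inversion is not a harmless rearrangement, because it forces you to interpolate the mixed couple $(\mathcal{BMO},\mathcal{H}_1)=(\mathcal{BMO}^c\cap\mathcal{BMO}^r,\;\mathcal{H}^c_1+\mathcal{H}^r_1)$ head on, and your proposed resolution of the intersection/sum mismatch does not work. There is no morphism of couples between $(\mathcal{BMO},\mathcal{H}_1)$ and the column couple $(\mathcal{BMO}^c,\mathcal{H}^c_1)$ in either direction: the natural inclusions go opposite ways at the two endpoints ($\mathcal{BMO}\hookrightarrow\mathcal{BMO}^c$ but $\mathcal{H}^c_1\hookrightarrow\mathcal{H}_1$), so the retract machinery yields \emph{neither} inclusion of (iii), not just ``fails on the sum side.'' Your fallback, getting the reverse inclusion ``by duality, exploiting that $*$ exchanges $\cap\leftrightarrow+$ and $\mathcal{BMO}\leftrightarrow\mathcal{H}_1$,'' also breaks down: the dual couple of $(\mathcal{BMO},\mathcal{H}_1)$ is $(\mathcal{BMO}^*,\mathcal{H}_1^*)=(\mathcal{BMO}^*,\mathcal{BMO})$, and since $\mathcal{BMO}$ is a non-reflexive dual space with $(\mathcal{BMO})^*\neq\mathcal{H}_1$, the hoped-for self-dual structure of the couple is simply not there. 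Theorem \ref{duality between H1 and BMO} only gives the duality in one direction.

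The paper's way around this is worth noting, because it is exactly the device your proposal is missing: for $2\le q<p<\infty$ it interpolates the couple $(\mathcal{BMO},L_q)$, where \emph{both} endpoints are intersections ($L_q=\mathcal{H}^c_q\cap\mathcal{H}^r_q$ by Theorem \ref{Birkhold-Gundy inequality}), so the one-sided inclusion $[A_0\cap B_0,A_1\cap B_1]_\theta\subset[A_0,A_1]_\theta\cap[B_0,B_1]_\theta$ of Lemma 4.2 applies cleanly and, combined with Theorem \ref{interpolation between Hp and BMO}, gives $[\mathcal{BMO},L_q]_{q/p}\subset L_p$; the reverse inclusion is the trivial consequence of $L_\infty(\N)\subset\mathcal{BMO}$ and $[L_\infty,L_q]_{q/p}=L_p$. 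The full range of (i) then follows by Wolff using only pure $L_p$-scale identities, (ii) by duality, and (iii) last. There is also a circularity risk in your scheme: extending (iii) from $p\ge2$ to $1<p<2$ via Wolff requires an identity of the form $[\mathcal{BMO},L_p]_\alpha=L_s$ with $p<2$, i.e.\ part (i) with $q<2$, which in your scheme is itself derived from (iii) by reiteration. Unless you restructure so that (i) in the range $q<2$ is obtained independently (as the paper does), the argument does not close.
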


In order to prove this theorem, we need the following result from
the theory of interpolation. We formulate it here without proof.

\begin{lemma}
Let $A_0,B_0,A_1,B_1$ be four Banach spaces satisfying the property
needed for interpolation. Then
$$[A_0+B_0,A_1+B_1]_{\theta}\supset[A_0,A_1]_{\theta}+[B_0,B_1]_{\theta}$$
and
$$[A_0\cap B_0,A_1\cap B_1]_{\theta}\subset[A_0,A_1]_{\theta}\cap[B_0,B_1]_{\theta}.$$
\end{lemma}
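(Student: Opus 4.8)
The plan is to deduce both inclusions from the single most basic property of the complex method, namely that it is an exact interpolation functor of exponent $\theta$. Concretely: if $(X_0,X_1)$ and $(Y_0,Y_1)$ are compatible couples and $T$ is a linear map with $T:X_j\to Y_j$ bounded of norm $M_j$ for $j=0,1$, then $T$ maps $[X_0,X_1]_\theta$ into $[Y_0,Y_1]_\theta$ with $\|T\|\leq M_0^{1-\theta}M_1^\theta$ (this is Theorem 4.1.2 of \cite{BeLo76}). First I would record that, under the stated compatibility hypothesis, all four spaces are continuously embedded in a common Hausdorff topological vector space, so that the couples $(A_0+B_0,A_1+B_1)$ and $(A_0\cap B_0,A_1\cap B_1)$ are well defined and compatible, and all the inclusion maps appearing below are genuine morphisms of couples.

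For the first inclusion I would apply functoriality to the two inclusion maps $\iota_A\colon(A_0,A_1)\hookrightarrow(A_0+B_0,A_1+B_1)$ and $\iota_B\colon(B_0,B_1)\hookrightarrow(A_0+B_0,A_1+B_1)$. Each is contractive on both endpoints, since the norm of an element of a sum space is at most its norm in either summand, so each induces a contraction $[A_0,A_1]_\theta\to[A_0+B_0,A_1+B_1]_\theta$ and $[B_0,B_1]_\theta\to[A_0+B_0,A_1+B_1]_\theta$. Given $x\in[A_0,A_1]_\theta+[B_0,B_1]_\theta$, any decomposition $x=a+b$ with $a\in[A_0,A_1]_\theta$ and $b\in[B_0,B_1]_\theta$ then lies in $[A_0+B_0,A_1+B_1]_\theta$ with $\|x\|_{[A_0+B_0,A_1+B_1]_\theta}\leq\|a\|_{[A_0,A_1]_\theta}+\|b\|_{[B_0,B_1]_\theta}$; taking the infimum over all such decompositions yields precisely the claimed inclusion, in fact with constant one.

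For the second inclusion I would instead apply functoriality to the inclusion maps $(A_0\cap B_0,A_1\cap B_1)\hookrightarrow(A_0,A_1)$ and $(A_0\cap B_0,A_1\cap B_1)\hookrightarrow(B_0,B_1)$. Each is again contractive on both endpoints, since the intersection norm dominates either factor norm, so each induces a contraction $[A_0\cap B_0,A_1\cap B_1]_\theta\to[A_0,A_1]_\theta$ and $[A_0\cap B_0,A_1\cap B_1]_\theta\to[B_0,B_1]_\theta$. Hence for $x\in[A_0\cap B_0,A_1\cap B_1]_\theta$ both $\|x\|_{[A_0,A_1]_\theta}$ and $\|x\|_{[B_0,B_1]_\theta}$ are bounded by $\|x\|_{[A_0\cap B_0,A_1\cap B_1]_\theta}$, which says exactly that $x\in[A_0,A_1]_\theta\cap[B_0,B_1]_\theta$ together with the max-norm estimate.

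The argument is entirely formal once functoriality is granted, so there is no analytic difficulty; the only point requiring care is the bookkeeping of the ambient compatibility. Specifically, one must check at the outset that the four interpolation spaces $[A_0,A_1]_\theta$, $[B_0,B_1]_\theta$, $[A_0+B_0,A_1+B_1]_\theta$ and $[A_0\cap B_0,A_1\cap B_1]_\theta$ all sit inside the same ambient space, so that the set-theoretic inclusions (and not merely the norm inequalities) are meaningful. This is guaranteed by fixing the common Hausdorff host space for the four original spaces and observing that every interpolation space in sight embeds continuously into it. With this in place, the two displayed inclusions follow immediately from the two applications of the interpolation property described above.
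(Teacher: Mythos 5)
Your proposal is correct. Note that the paper states this lemma explicitly without proof (``We formulate it here without proof''), so there is no internal argument to compare against; your proof supplies exactly the standard justification the authors implicitly defer to \cite{BeLo76}. The argument via exactness of the complex method applied to the couple inclusions $(A_0,A_1)\hookrightarrow(A_0+B_0,A_1+B_1)$ and $(A_0\cap B_0,A_1\cap B_1)\hookrightarrow(A_0,A_1)$ (and their $B$-counterparts) is the canonical one: each inclusion is contractive at both endpoints, functoriality transfers this to the interpolation spaces, and the sum and intersection norm estimates follow with constant one. Your attention to the compatibility bookkeeping --- that all four spaces, hence their sums, intersections, and the resulting interpolation spaces, sit inside one common Hausdorff topological vector space, so that the interpolated maps really are set-theoretic inclusions --- is the only point where care is needed, and you handle it correctly.
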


\begin{proof} $\rm(i)$ We also exploit the similar but different strategy with that in the proof of
Theorem \ref{interpolation between Hp and BMO}.

Step 1: we prove the results for $2\leq q<p<\infty$. By Theorem
\ref{Birkhold-Gundy inequality}, Theorem \ref{interpolation between
Hp and BMO} and the lemma, we have
$$[\mathcal{BMO}(\real,\M),L_q(\N)]_{\frac{q}{p}}\subset L_p(\N).$$
The inverse direction follows from $L_{\8}(\N)\subset
\mathcal{BMO}(\real,\M)$,
\be\begin{split}L_p(\N)&=[L_{\8}(\N),L_q(\N)]_{\frac{q}{p}}\\
&\subset[\mathcal{BMO}(\real,\M),L_q(\N)]_{\frac{q}{p}}\\
\end{split}\ee

Step 2: we prove the results for $1\leq q<2\leq p<\infty$. By Step
1, we have
$$[\mathcal{BMO}(\real,\M),L_2(\N)]_{\frac{2}{p}}=L_p(\N).$$
Together with
$$L_2(\N)=[L_p(\N),L_q(\N)]_{\theta},$$
Wolff's interpolation yields the conclusion.

Step 3: we prove the results for $1\leq q<p<2$. By Step 2, we have
$$[\mathcal{BMO}(\real,\M),L_p(\N)]_{\frac{p}{2}}=L_2(\N).$$
Together with
$$L_p(\N)=[L_2(\N),L_q(\N)]_{\theta},$$
Wolff's interpolation yields the conclusion.

$\rm(ii)$ The results for $1<q<p<\infty$ can be immediately proved
by duality and the partial results in $(i)$. For $p=\infty$, take
$q<s<\infty$, then by Wolff's argument, we get the conclusion.

$\rm(iii)$ First, we prove conclusion for $p<2$. Then by $\rm(i)$
and $\rm(ii)$, we have
$$[\mathcal{BMO}(\real,\M),L_p(\N)]_{\frac{p}{p'}}=L_{p'}(\N)$$
and
$$[\mathcal{H}_1(\real,\M),L_{p'}(\N)]_{\frac{p}{p'}}=L_p(\N).$$
Therefore, we end with Wolff's argument. Second, the proof for $p>2$
is the same. At last, when $p=2$, we can take $s>2$, by the results
for $p\neq2$ and reiteration theorem in \cite[Theorem
4.6.1]{BeLo76}, we get \be\begin{split} L_2&=[L_s,L_{s'}]_{\theta}
=[\mathcal{BMO}(\real,\M),\mathcal{H}_1(\real,\M)]_{\frac{1}{s}},
\mathcal{BMO}(\real,\M),\mathcal{H}_1(\real,\M)]_{\frac{1}{s'}}]_{\theta}\\
&=[\mathcal{BMO}(\real,\M),\mathcal{H}_1(\real,\M)]_{\theta}.\\
\end{split}
\ee
\end{proof}

\section{Comparison with Mei's results}
We denote the column Hardy space by $H^c_p(\real,\M)$ and the
bounded mean oscillation space by $BMO^c(\real,\M)$ in \cite{Mei07}.
We have the following result.

\begin{theorem} We have
$$\BMO^c(\real,\M)=BMO^c(\real,\M)$$ with equivalent norms. Similar
results holds for the row spaces. Consequently,
$\BMO(\real,\M)=\BMO(\real,\M)$ with equivalent norms.
\end{theorem}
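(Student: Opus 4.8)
The plan is to prove the column identity $\BMO^c(\real,\M)=BMO^c(\real,\M)$ with equivalent norms by establishing the two norm inequalities separately; the row case then follows by applying the column result to $\varphi^*$, and since the full $BMO$ spaces on both sides are the intersections of their column and row parts, the stated consequence is immediate. Throughout I use Mei's oscillation description
\[
\|\varphi\|_{BMO^c}=\sup_{Q}\Big\|\big(\tfrac{1}{|Q|}\int_Q|\varphi-\varphi_Q|^2\,dx\big)^{\frac12}\Big\|_{\M},\qquad \varphi_Q=\tfrac{1}{|Q|}\int_Q\varphi,
\]
the supremum being over all intervals $Q$. Note that the wavelet $\BMO^c$ norm already controls genuine (non-dyadic) oscillation precisely because the $w_I$ are smooth and see across dyadic boundaries, so no naive dyadic-to-continuous reduction is invoked.

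First inequality ($\|\varphi\|_{\BMO^c}\le c\,\|\varphi\|_{BMO^c}$). Fix a dyadic interval $J$ and let $\tilde J$ be a fixed dilate. Since $\int w_I=0$, for every $I\subseteq J$ we have $\la\varphi,w_I\ra=\la\varphi-\varphi_{\tilde J},w_I\ra$, so I split $\varphi-\varphi_{\tilde J}=h_1+h_2$ with $h_1=(\varphi-\varphi_{\tilde J})\mathds{1}_{\tilde J}$ and $h_2=(\varphi-\varphi_{\tilde J})\mathds{1}_{\tilde J^c}$. For the local part, the orthonormality of the system $(w_I)_{I\in\mathcal D}$ gives the operator-valued Bessel inequality
\[
\sum_{I\subseteq J}|\la h_1,w_I\ra|^2\le\int_{\tilde J}|\varphi-\varphi_{\tilde J}|^2\,dx\le c\,|J|\,\|\varphi\|_{BMO^c}^2,
\]
which after dividing by $|J|$ is of the required form. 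For the far part I estimate each $\la h_2,w_I\ra$ by an operator Cauchy--Schwarz inequality with the scalar weight $|w_I|$, insert the Schwartz decay $|w_I(y)|\le c\,|I|^{-\frac12}(1+|y-c_I|/|I|)^{-m}$, and sum over the dyadic annuli $2^{k}\tilde J\setminus2^{k-1}\tilde J$; on the $k$-th annulus the growth estimate $\|\varphi_{2^k\tilde J}-\varphi_{\tilde J}\|_{\M}\le c\,k\,\|\varphi\|_{BMO^c}$ (itself a consequence of Kadison--Schwarz applied to the normalized integral) controls $\varphi-\varphi_{\tilde J}$, and the rapid decay makes the geometric series converge to a bound of the form $c\,|J|\,\|\varphi\|_{BMO^c}^2$.

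Second inequality ($\|\varphi\|_{BMO^c}\le c\,\|\varphi\|_{\BMO^c}$). Fix an arbitrary interval $Q$ and use the Calder\'on identity (\ref{Calderon identity}) to write $\varphi-\varphi_Q=\sum_{I}\la\varphi,w_I\ra w_I-\varphi_Q$. I split the sum according to scale and location: the terms with $|I|\le|Q|$ whose mass concentrates near $Q$ are controlled directly by the wavelet Carleson ($\BMO^c$) condition through an orthogonality/Bessel estimate on $Q$, exactly dual to the local estimate above; the remaining large-scale and far terms, after subtracting the constant $\varphi_Q$, are small on $Q$ because each such $w_I$ varies slowly there, which is quantified by the gradient bound $|w_I'(y)|\le c\,|I|^{-\frac32}(1+|y-c_I|/|I|)^{-m}$. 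Carrying these scalar decay estimates through as operator inequalities and summing the resulting geometric series over scales and annuli yields $\tfrac{1}{|Q|}\int_Q|\varphi-\varphi_Q|^2\le c\,\|\varphi\|_{\BMO^c}^2$ as operators in $\M$, hence the norm bound.

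The main obstacle is the far-field analysis in both directions: because Meyer wavelets are not compactly supported one cannot localize cleanly, and must instead exploit the full Schwartz decay of $w$ and $w'$ together with the logarithmic growth of the operator averages $\varphi_{2^kJ}$ across dyadic dilates, while carrying all of these originally scalar estimates through as genuine operator inequalities by means of Kadison--Schwarz and operator Cauchy--Schwarz, exactly as in the proof of Theorem~\ref{duality between H1 and BMO}. I note a softer alternative that bypasses the tail computations: Mei's Fefferman duality $BMO^c=(H^c_1)^*$ combined with Theorem~\ref{duality between H1 and BMO} reduces the claim to the identification $\mathcal{H}^c_1(\real,\M)=H^c_1(\real,\M)$ of the two preduals, i.e.\ to a comparison of the wavelet square function $S_c$ with Mei's Lusin/Littlewood--Paley square function; this merely relocates the same almost-orthogonality estimate between the wavelet family and the Poisson kernel to the level of square functions.
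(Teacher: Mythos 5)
Your proposal is correct and follows essentially the same route as the paper: for $BMO^c\subset\BMO^c$ the paper also uses $\int w_I=0$ to replace $\varphi$ by $\varphi-\varphi_{2I}$, splits into a local part handled by the Bessel/orthonormality inequality and a far part summed over dyadic annuli using the Schwartz decay and the controlled growth of the averages; for $\BMO^c\subset BMO^c$ the paper likewise splits the wavelet sum into large-scale, small-far, and small-near collections, renormalizing the large-scale terms by $w_J(c_I)$ (your subtraction of $\varphi_Q$ plus the gradient bound plays the same role) and invoking the Carleson condition for the near terms. The only slight imprecision is that for the small-scale far intervals it is the size decay of $w_I$ on $Q$, not the gradient bound, that does the work, but this does not affect the argument.
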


The theorem can be easily seen from the corresponding
$BMO(\real,H)$-spaces. However, we can exploit the idea of
\cite{HSV} to prove our $\BMO^c(\real,\M)$ also coincide with that
defined by the mean oscillation $BMO(\real,H)$.

\begin{proof}
$\BMO^c(\real,\M)\subset BMO^c(\real,\M).$ Let $\varphi\in
\mathcal{BMO}_{c}(\mathbb{R},\mathcal{M})$. As in \cite{HSV}, fix a
finite interval $I\subset\mathbb{R}$, and consider the collections
of dyadic intervals

\begin{enumerate}[{\rm (1)}]
\item $\mathcal{D}_1:=\{J\in \mathcal{D};2|J|>|I|\}$'
\item $\mathcal{D}_2:=\{J\in \mathcal{D};2|J|\leq|I|,2J\cap2I=\emptyset\}$,
\item $\mathcal{D}_3:=\{J\in
\mathcal{D};2|J|\leq|I|,2J\cap2I\neq\emptyset\}$.
\end{enumerate}
Let $a_J=\langle\varphi,\omega_J\rangle$, then we have a priori
formal series
$$\varphi_1(x)=\sum_{J\in \mathcal{D}_1}a_J[\omega_J(x)-\omega_J(c_I)],  \varphi_i(x)=\sum_{J\in \mathcal{D}_i}a_J\omega_J(x), i=2,3,$$
where $c_I$ is the center of the interval $I$. Denote
$\varphi_I=\varphi_1+\varphi_2+\varphi_3$, by a similar discussion
in \cite{HSV}, we only need to prove:
$$\|\frac{1}{|I|}\int_I|\varphi_I(x)|^2dx\|_{\mathcal{M}}<\infty.$$
By scaling we can assume:
$$\sup_I\frac{1}{|I|}\|\sum_{J\subset I}|a_J|^2\|=1.$$
Then we have the obvious bound for individual terms
$\|a_J\|\leq|J|^{\frac{1}{2}}$.

Estimates for $\varphi_1$:
\begin{equation*}
\begin{split}
\|\frac{1}{|I|}\int_I|\varphi_1(x)|^2dx\|& \leq
\frac{1}{|I|}(\sum_{J\in\mathcal{D}_1}\|a_J\||\omega_J(x)-\omega_J(c_I)|)^2dx\\
&\leq c\frac{1}{|I|}\int_I[\sum_{J\in\mathcal{D}_1}|J|^{
\frac{1}{2}}|I||J|^{-\frac{3}{2}}(1+\frac{dist(I,J)}{|J|})^{-2}]^2dx\\
&=c[\sum_{j=0}^{\infty}\sum_{|J|\in(2^{j-1},2^j]|I|}|I||J|^{-1}(1+\frac{dist(I,J)}{|J|})^{-2}]^2<\infty.
\end{split}
\end{equation*}

Estimates for $\varphi_2$:
\begin{equation*}
\begin{split}
\|\frac{1}{|I|}\int_I|\varphi_2(x)|^2dx\|&\leq\frac{1}{|I|}\int_I\|\sum_{\mathcal{D}_2}a_J\omega_J(x)\|^2dx\\
&\leq
\frac{1}{|I|}\int_I(\sum_{\mathcal{D}_2}\|a_J\||\omega_J(x)|)^2dx\\
&\leq c\frac{1}{|I|}\int_I[\sum_{\mathcal{D}_2}|J|^{\frac{1}{2}}|J|^{-\frac{1}{2}}(\frac{dist(I,J)}{|J|})^{-2}]^2dx\\
&=c[\sum_{j=1}^{\infty}\sum_{|J|\in(2^{-j-1},2^{-j})|I|,
dist(I,J)>2^{-1}|I|}(\frac{dist(I,J)}{|J|})^{-2}]^2<\infty.
\end{split}
\end{equation*}

Estimates for $\varphi_3$:
\begin{equation*}
\begin{split}
\|\frac{1}{|I|}\int_I|\varphi_3(x)|^2dx\|&\leq\frac{1}{|I|}\|\sum_{J\in
\mathcal{D}_3}|a_J|^2\|\leq\frac{1}{|I|}\|\sum_{J\subset4I}|a_J|^2\|<\infty
\end{split}
\end{equation*}

Hence we deduce that:
$$\|\int_I|\varphi_I(x)|^2dx\|_{\mathcal{M}}\leq c\sum_{i=1}^3\|\int_I|\varphi_i(x)|^2dx\|_{\mathcal{M}}\leq c|I|$$

Now we turn to the proof of inverse direction
$BMO^c(\real,\M)\subset\BMO^c(\real,\M).$ Let $\varphi\in
BMO^{c}(\mathbb{R},\mathcal{M})$. The proof is very similar to that
in Mei's work \cite{Mei07}. For any dyadic interval
$I\subset\mathbb{R}$, write $\varphi=\varphi_1+\varphi_2+\varphi_3$,
where $\varphi_1=(\varphi-\varphi_{2I})\chi_{2I},
\varphi_2=(\varphi-\varphi_{2I})\chi_{2I^c},\varphi_3=\varphi_{2I}$.

Thus
\begin{equation*}
\begin{split}
\sum_{J\subset I}|\langle\varphi,\omega_J\rangle|^2\leq2
(\sum_{J\subset
I}|\langle\varphi_1,\omega_J\rangle|^2+\sum_{J\subset
I}|\langle\varphi_2,\omega_J\rangle|^2)
\end{split}
\end{equation*}

Estimates for $\varphi_1$:
\begin{equation*}
\begin{split}
\|\sum_{J\subset I}|\langle\varphi_1,\omega_J\rangle|^2\|\leq
\|\int|\varphi_1(x)|^2dx\|\leq
c\|\int_{2I}|\varphi-\varphi_{2I}|^2\|\leq c|I|
\end{split}
\end{equation*}

Estimates for $\varphi_2$:
\begin{equation*}
\begin{split}
\|\sum_{J\subset
I}|\langle\varphi_2,\omega_J\rangle|^2\|&=\|\sum_{J\subset
I}|\sum_{k=1}^{\infty}\int_{2^{k+1}I/2^kI}\varphi_2\omega_J dx|^2\|\\
&\leq\|\sum_{J \subset
I}(\sum_{k=1}^{\infty}\frac{1}{2^{2k}}\int_{2^{k+1}I/2^kI}|\varphi_2|^2)(\sum_{k=1}^{\infty}2^{2k}\int_{2^{k+1}I/2^kI}|\omega_J|^2)\|\\
&\leq
c(\sum_{k=1}^{\infty}\frac{1}{2^{2k}}\|\int_{2^{k+1}I}|\varphi-\varphi_{2I}|^2\|)\\
&\qquad\qquad(\sum_{J\subset
I}\sum_{k=1}^{\infty}2^{2k}\int_{2^{k+1}I/2^kI}|\omega_J|^2)\\
&\leq c|I|\|\varphi\|^2_{\BMO_c}\sum_{j=0}^{\infty}2^j
\sum_{k=1}^{\infty}\int_{2^{k+1}I/2^kI}2^{2k}\frac{|2^{-j}I|^3}{|2^{k}I|^4}\\
&\leq c|I|
\end{split}
\end{equation*}

Therefore $\|\sum_{J\subset
I}|\langle\varphi,\omega_J\rangle|^2\|\leq c|I|$, which completes
our proof.
\end{proof}

Combined with Theorem \ref{duality between Hp and BMOp'} and Theorem
\ref{interpolation between Hp and BMO}, we have the following
corollary

\begin{corollary} For $1\leq p<\infty$, we have
 $$\mathcal{H}^c_p(\real,\M)=H^c_p(\real,\M).$$ Similar results hold for
$\mathcal{H}^r_p$ and ${H}^{r}_p$, and $\mathcal{H}_p$ and ${H}_p$.
\end{corollary}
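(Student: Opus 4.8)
The plan is to anchor the identification at the endpoint $p=1$, where it follows from the coincidence of the two $BMO$ spaces just proved, and then to spread it across the whole scale $1\le p<\infty$ by the interpolation and duality results already at our disposal. First I would treat $p=1$. Both $\mathcal{H}^c_1(\real,\M)$ and $H^c_1(\real,\M)$ are completions of $S_{\mathcal{N}}$, and on $S_{\mathcal{N}}$ their norms are recovered by duality against one and the same bilinear pairing $(\varphi,f)\mapsto\tau\int\varphi^*f$. Our Theorem \ref{duality between H1 and BMO} expresses $\|f\|_{\mathcal{H}^c_1}$ as $\sup\{|\tau\int\varphi^*f|:\|\varphi\|_{\BMO^c}\le1\}$, while the Fefferman duality of \cite{Mei07} expresses $\|f\|_{H^c_1}$ by the same supremum taken over the unit ball of $BMO^c(\real,\M)$. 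Since the preceding theorem gives $\BMO^c(\real,\M)=BMO^c(\real,\M)$ with equivalent norms, the two unit balls are comparable, so the two suprema are equivalent on $S_{\mathcal{N}}$ and hence $\mathcal{H}^c_1(\real,\M)=H^c_1(\real,\M)$.

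Next, for $1<p<\infty$ I would propagate the identity from the endpoints by complex interpolation. Taking $q=1$ in Theorem \ref{interpolation between Hp and BMO} gives $\mathcal{H}^c_p(\real,\M)=[\BMO^c(\real,\M),\mathcal{H}^c_1(\real,\M)]_{1/p}$, and the parallel formula $H^c_p(\real,\M)=[BMO^c(\real,\M),H^c_1(\real,\M)]_{1/p}$ is established in \cite{Mei07}. Both pairs of endpoints coincide (the $BMO$ spaces by the preceding theorem, the $H_1$ spaces by the first step), hence so do the interpolation spaces, and $\mathcal{H}^c_p(\real,\M)=H^c_p(\real,\M)$ for all $1\le p<\infty$. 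With the range $2<p<\infty$ thus secured, Theorem \ref{duality between Hp and BMOp'} also yields a duality-based proof for $1<p<2$: it gives $(\mathcal{H}^c_p)^*=L^c_{p'}\MO(\real,\M)$, and by Corollary \ref{Hp=BMOp} this $\MO$ space equals $\mathcal{H}^c_{p'}(\real,\M)$; the corresponding statements of \cite{Mei07} identify $(H^c_p)^*$ with Mei's $L^c_{p'}\MO=H^c_{p'}(\real,\M)$ under the same pairing. Since $\mathcal{H}^c_{p'}=H^c_{p'}$ for $2<p'<\infty$, the two dual spaces agree, and comparing the induced predual norms on $S_{\mathcal{N}}$ recovers $\mathcal{H}^c_p(\real,\M)=H^c_p(\real,\M)$.

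Finally, the row identity $\mathcal{H}^r_p(\real,\M)=H^r_p(\real,\M)$ follows by applying the column result to adjoints, and the two-sided identity $\mathcal{H}_p(\real,\M)=H_p(\real,\M)$ then follows from the definition of these spaces as a sum (for $p<2$) or an intersection (for $p\ge2$) of their column and row parts. The step I expect to demand the most care is the matching of the two dualities: one must verify that the bilinear form $\tau\int\varphi^*f$ appearing in our Theorems \ref{duality between H1 and BMO} and \ref{duality between Hp and BMOp'} is literally the form used in \cite{Mei07}, and that Mei's interpolation and duality statements are taken on the same dense subspace $S_{\mathcal{N}}$, so that the interpolation brackets and the predual pairings can be compared directly. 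Granting this compatibility, the whole argument reduces, via $\BMO^c=BMO^c$, to formal manipulations and needs no further square-function estimate.
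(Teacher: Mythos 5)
Your argument is correct and follows the same strategy the paper intends: the corollary is stated there without a written proof, as a combination of the $\BMO^c=BMO^c$ identification with the duality theorems and the $[\BMO^c,\mathcal{H}^c_q]_{q/p}=\mathcal{H}^c_p$ interpolation, which is exactly what you carry out (anchoring at $p=1$ via the Fefferman duality and then interpolating from $q=1$, with the $L^c_{p'}\MO$ duality as an alternative for $1<p<2$). Your explicit caveat about matching the pairing $\tau\int\varphi^*f$ and the dense subspace with Mei's conventions is the right point to check, and it does hold.
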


If $\M=\mathbb{C},$ $\mathcal{H}_1(\real, \mathbb{C})$ is just the
usual Hardy space $H_1(\real)$ on $\mathbb{R}.$ $H_1(\real)$ also
has the following characterization: $$H_1(\real)=\{f\in L_1(\real):
H(f)\in L_1(\real)\},$$ where $H$ is the Hilbert transform. For any
$f\in H_1(\real)$,
$$\|f\|_{H_1(\real)}\approx \|f\|_{L_1(\real)}+\|H(f)\|_{L_1(\real)}.$$
Thus $H_1(\real)$ can be viewed as a subspace of $L_1(\real)\oplus_1
L_1(\real)$. The latter direct sum has its natural operator
structure as an $L_1$ space. This induces an operator space
structure on $H_1(\real).$ Although $(w_I)_{I\in\mathcal{D}}$ is a
unconditional basis of $H_1(\real)$, Ricard \cite{Ric01} (see also
\cite{Ric02}) proved that $H_1(\real)$ does not have complete
unconditional basis. However, in noncommutative analysis, one can
introduce another natural operator space structure on $H_1(\real)$
as follows: $S_1(H_1(\real))=\mathcal{H}_1(\real, B(\ell_2)),$ where
$S_1$ is the trace class on $\ell_2.$ Then we have the following
result. Note that Ricard \cite{Ric02} obtained a similar result
using Hilbert space techniques.

\begin{corollary} The complete orthogonal systems $(w_I)_{I\in\mathcal {D}}$ of $L_2(\real)$ is
a completely unconditional basis for $H_1(\real)$ if we define the
operator space structure imposed on $H_1(\real)$ by
$\mathcal{S}_1(H_1(\real))=\mathcal{H}_1(\real,B(\ell_2))$.
\end{corollary}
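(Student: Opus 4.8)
The plan is to show that the wavelet system $(w_I)_{I\in\mathcal{D}}$, viewed through the operator space structure defined by $\mathcal{S}_1(H_1(\real))=\mathcal{H}_1(\real,B(\ell_2))$, is a completely unconditional basis of $H_1(\real)$. By the definition of complete unconditionality, this amounts to showing that for any choice of signs, the operator sending each basis coefficient $\la f,w_I\ra$ to $\varepsilon_I\la f,w_I\ra$ (with $\varepsilon_I=\pm 1$) is completely bounded on $H_1(\real)$, uniformly in the signs. Equivalently, since the completely bounded maps on $H_1(\real)$ with its $\mathcal{S}_1$-structure correspond exactly to the bounded maps on $\mathcal{H}_1(\real,B(\ell_2))$, it suffices to prove that the diagonal multiplier $M_\varepsilon : f \mapsto \sum_{I}\varepsilon_I \la f,w_I\ra w_I$ is bounded on $\mathcal{H}_1(\real,B(\ell_2))$ with norm independent of the choice of $(\varepsilon_I)_I$.

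First I would recall the identification of $\mathcal{H}_1(\real,\M)$ established in the previous corollary, namely $\mathcal{H}_1(\real,\M)=H_1(\real,\M)$ with equivalent norms, which lets us work entirely within the wavelet-defined Hardy space. Then I would observe that for $1\leq p<2$ the norm on $\mathcal{H}_1(\real,\M)$ is the infimum over column/row decompositions $f=g+h$ of $\|g\|_{\mathcal{H}^c_1}+\|h\|_{\mathcal{H}^r_1}$, where each of $\|\cdot\|_{\mathcal{H}^c_1}$ and $\|\cdot\|_{\mathcal{H}^r_1}$ is defined through the square functions $S_c$ and $S_r$ in (\ref{column square function}) and (\ref{row square function}). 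The crucial point is that these square functions depend on the wavelet coefficients only through their moduli $|\la f,w_I\ra|^2$, which are manifestly invariant under the sign change $\la f,w_I\ra\mapsto \varepsilon_I\la f,w_I\ra$. Hence $S_c(M_\varepsilon f)=S_c(f)$ and $S_r(M_\varepsilon f)=S_r(f)$ pointwise, so $M_\varepsilon$ is an isometry on both $\mathcal{H}^c_1$ and $\mathcal{H}^r_1$, and therefore on the sum space $\mathcal{H}_1(\real,\M)$ taking $\M=B(\ell_2)$.

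The main step is then to upgrade ordinary unconditionality to complete unconditionality. The mechanism is precisely the operator space structure: complete boundedness of $M_\varepsilon$ on $H_1(\real)$ equipped with $\mathcal{S}_1(H_1(\real))=\mathcal{H}_1(\real,B(\ell_2))$ means boundedness of $M_\varepsilon\otimes\mathrm{id}$ on $\mathcal{H}_1(\real,B(\ell_2))$, and this is exactly the isometry property just established at the level $\M=B(\ell_2)$. Since the isometry constant is $1$ uniformly over all sign sequences $(\varepsilon_I)_I$, the system $(w_I)_I$ is completely unconditional. I would close by noting that the completeness and basis property of $(w_I)_I$ follow from the operator-valued Calder\'on identity (\ref{Calderon identity}) together with the density of $S_{\N}$ in $\mathcal{H}_1(\real,\M)$, so that every element is represented by its wavelet expansion.

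The step I expect to be the main obstacle is the passage from the formal invariance $S_c(M_\varepsilon f)=S_c(f)$ to the genuinely complete (rather than merely bounded, i.e.\ classical) unconditionality, which hinges on having the correct operator space structure in place; the content is essentially that the square-function description of $\mathcal{H}_1$ automatically carries matrix amplifications, so no separate matrix-level estimate is needed beyond reusing the definition at the von Neumann algebra $B(\ell_2)$. The only genuine subtlety is to confirm that the sign multiplier is well-defined and bounded on the dense subspace $S_{\N}$ and extends by density, which follows immediately since the extension is an isometry.
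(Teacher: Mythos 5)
Your proposal is correct and follows essentially the same route as the paper: the whole content is that the square functions $S_c$ and $S_r$ depend only on the moduli $|\la f,w_I\ra|^2$, so sign (and finite-subset) multipliers are contractive on $\mathcal{H}^c_1$ and $\mathcal{H}^r_1$ and hence on the sum space, and complete unconditionality is automatic because the operator space structure is defined by running this argument with $\M=B(\ell_2)$. The paper phrases it with truncations $T_{\varepsilon}f=\sum_{I\in\mathcal{I}}\varepsilon_I\la f,w_I\ra w_I$ over finite $\mathcal{I}\subset\mathcal{D}$ and an $\epsilon$-optimal decomposition $f=g+h$, which is the same mechanism you describe.
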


\begin{proof}Fix a finite subset $\mathcal
{I}\subset\mathcal {D}$. Let
$T_{\varepsilon}f\doteq\sum_{I\in\mathcal {I}}\varepsilon_I\la
f,w_I\ra w_I$, where $\varepsilon_I=\pm1$. By the definition of
$\mathcal{H}^c_1(\real,\M)$, the orthogonality of
$(w_I)_{I\in\mathcal {D}}$  yields immediately that
\be\begin{split}\|T_{\varepsilon}f\|_{\mathcal{H}^c_1}&=\Big\|\Big(\sum_{I\in\mathcal
{I}}\frac{|\la f,w_I\ra|^2}{|I|}\mathds{1}_I(x)
\Big)^{\frac{1}{2}}\Big\|_{{L_1(\N)}}\\
&\leq\Big\|\Big(\sum_{I\in\mathcal {D}}\frac{|\la
f,w_I\ra|^2}{|I|}\mathds{1}_I(x)
\Big)^{\frac{1}{2}}\Big\|_{{L_1(\N)}}
=\|f\|_{\mathcal{H}^c_1}\\
\end{split}\ee
Similarly, the above inequality holds for
$\mathcal{H}^r_1(\real,\M)$. Now, let $f\in\mathcal{H}_1(\real,\M)$,
then for any $\epsilon>0$, there exists a decomposition $f=g+h$ such
that
$$\|g\|_{\mathcal{H}^c_1(\real,\M)}+\|h\|_{\mathcal{H}^r_1(\real,\M)}\leq\|f\|_{\mathcal{H}_1(\real,\M)}+\epsilon.$$
Therefore
\be\begin{split}\|T_{\varepsilon}f\|_{\mathcal{H}_1(\real,\M)}
&\leq\|T_{\varepsilon}g\|_{\mathcal{H}^c_1(\real,\M)}+
\|T_{\varepsilon}h\|_{\mathcal{H}^c_1(\real,\M)}\\
&\leq\|g\|_{\mathcal{H}^c_1(\real,\M)}+\|h\|_{\mathcal{H}^r_1(\real,\M)}\leq\|f\|_{\mathcal{H}_1(\real,\M)}+\epsilon.\\
\end{split}\ee
Let $\epsilon\rightarrow0$, we get the result.
\end{proof}

\end{document}